\theoremstyle{plain}
\newtheorem {Lem}{Lemma}[section]
\newtheorem {The}{Theorem}
\newtheorem {Prop}[Lem]{Proposition}
\theoremstyle{remark}
\newtheorem {Rem}[Lem]{Remark}
\newtheorem {Exp}[Lem]{Example}
\theoremstyle{definition}
\newtheorem {Def}[Lem]{Definition}
\newtheorem {defn}[Lem]{Definition}
\newcommand{\GL}{\operatorname{GL}}
\newcommand{\Hom}{\operatorname{Hom}}
\newcommand{\End}{\operatorname{End}}
\newcommand{\Auto}{\operatorname{Aut}}
\newcommand{\id}{\operatorname{id}}
\newif\ifcomm
\let\ifcomm\iffalse
\def\A{\operatorname{A}}
\def\E{\operatorname{E}}
\def\mC{\mathcal C}
\def\mS{\mathcal S}
\def\mP{\mathcal P}
\DeclareMathOperator{\mM}{Pgr}
\subjclass[2010]{19B99,16W50}\keywords{Classical $K$-theory, $K_1$ of  graded rings, Category with suspension, Category of graded modules}
\title{A matrix description for $K_1$ of  graded rings}
\author{Zuhong Zhang}
\address{Department of  Mathematics\\
 Beijing Institute of Technology\\
 Beijing\\ P. R. China}
\email{zuhong@gmail.com}
\thanks{The work was supported by the
joint Sino-Russian project 13-01-91150.}
\begin{document}

\begin{abstract}
The current paper is  dedicated to the study of the classical $K_1$ groups of graded rings.  Let $A$ be  a $\Gamma$ graded  ring with identity $1$, where the grading $\Gamma$ is an abelian group. We associate a category with suspension to the $\Gamma$ graded ring $A$. This allows us to construct the group valued functor  $K_1$ of graded rings. It will be denoted by $K_1^{gr}$. It is not only  an abelian group but also  a $\mathbb Z[\Gamma]$-module. From the construction, it follows that  there exists ``locally'' a matrix description of  $K_1^{gr}$ of graded rings.  The matrix description makes it possible to  compute $K_1^{gr}$ of various types of graded rings. The $K_1^{gr}$ satisfies the well known $K$-theory exact sequence 
$$
K_{1}^{gr}(A,I)\to K_1^{gr}(A)\to K_1^{gr}(A/I)
$$
for any graded ideal $I$ of $A$. The above is used to compute  $K^{gr}_1$ of cross products.
\end{abstract}

\maketitle
\tableofcontents

\section*{Introduction}
There has been a recent upturn in interest in graded $K$-theory, motivated by the classification problem for Leavitt path algebras \cite{RH1}, the representation theory of Hecke algebra \cite{AK}  and by  the graded $K$-theory itself \cite{RHTH}.

The classical group valued functor $K_1$ of rings  was  introduced  by Hyman Bass about half century ago. The matrix description of $K_1$ is  one of the most important tools available for computing $K_1$. However in the setting of graded rings, there is no obvious way of defining $K_1$ by matrices. In the current paper, we fill in this gap and give an explicit matrix description of $K_1$ of graded rings.

The strategy of our construction of $K_1$ of graded rings to build up   $K_1$ locally  from ``pieces'' and then ``glue'' all the  pieces by a direct limit.    Each piece, which we think of as a ``local'' $K_1$, has its own $K_1$ group  possessing a matrix description. Graded $K_1$ of graded rings is a direct limit of  the ``local'' $K_1$ groups, each of which has a matrices description.

More precisely, for any   $\Gamma$ graded ring $A$, the category $\mM A$ of graded projective $A$-modules is  a category with suspension (cf. 
Definition~\ref{def:suspension}). Let $S$ be a finite subfamily of elements of  the grading $\Gamma$. We construct a full subcategory $\mM A(S)$ of $\mM A$, which is a category with exact sequences (cf. Definition~\ref{def:K1_C}). The $K_1$ group of the subcategory $\mM A(S)$, denoted by $K_1(A)(S)$, is known by Bass' categorical definition of $K_1$ \cite[Definition~VII.1.4]{Bass}. These groups are our ``local'' $K_1$ groups. The $K_1(A)(S)$ groups together with the natural homomorphisms between them form a direct system of abelian groups, whose limit coincides with the $K_1$ of $\mM A$. On the other hand,  general arguments in linear algebra show that each $K_1(A)(S)$ possesses a matrix description. Thus graded $K_1$ of graded rings can be described ``locally'' by matrices. In general, the author does not know if there is a ``global'' matrix description of graded $K_1$ of graded rings.


The local matrix description of graded $K_1$ introduces a concrete tool for calculating graded $K_1$ groups. Graded Dedekind rings, graded local rings and  graded rings with finite graded stable rank (see \cite{RH2}) are promising cases for computing their graded $K_1$ groups. We are working towards this goal.


The rest of the paper is organized as follows. In Section~\ref{sec:cat},  we recall the notion of a category with suspension and  $K_1$ groups of  categories with suspension. In Section~\ref{s:1}, we briefly review some well-known facts of graded rings and graded 
modules, as well as  fix notation. We refer readers to \cite{Oy1,RH2} for more details. Then in Section~\ref{s:4}, we construct $K_1^{gr}$ from ``finite pieces''  and show that the new construction coincides with the categorical definition in Section~\ref{sec:cat}. Furthermore, the new definition allows us to obtain  an group action of the grading $\Gamma$  on $K_1^{gr}$.
In Section~\ref{matrix},  we  introduce  the notions of elementary and congruence  subgroups in the setting of graded rings, and study  the finite pieces of $K_1^{gr}$  using matrices.   In Section~\ref{s:5}, the well-known $K$-theory exact sequence is established  for $K_1^{gr}$. Finally, as an application, we show in Section~\ref{s:final} that if $A$ is a  cross product then $K_1^{gr}(A)\cong K_1(A_0)$ with trivial $\Gamma$ action.

\section{Category with suspension and its $K_1$}\label{sec:cat}
In this section,  we recall the categorical definition of $K_1$ groups due to Hyman Bass   in the setting of  categories with  suspension. 

\begin{Def}\label{def:K1_C}
A category $\mathcal C$ is called a {\em category with exact sequences} if $\mC$ is an abelian category with a full additive subcategory $\mathcal P$ satisfying the following additional conditions:
\begin{item}
\item[C1.] $\mathcal P$ is closed under extensions, i.e., if 
$$0\to  P_1\to P\to P_2\to 0$$
is an exact sequence in $\mathcal C$ and $P_1, P_2\in \mathcal P$ then $P\in \mathcal P$.
\item[C2.] $\mathcal P$ has a small skeleton (cf. \cite[Definition~3.1.1]{Rosen}).
\end{item}
\end{Def} 
\begin{Def}\label{def:suspension}
Let $\Gamma$ be an abelian group and  $\mC$  a category with exact sequences. Suppose that 
$$\mS_\Gamma=\{\mS_\lambda\text{ is an  exact auto-equivalence on }  \mC \mid \lambda\in\Gamma\},$$ 
and suppose that $\mS_\Gamma$ satisfies the following relations
\begin{itemize}
\item[1.] $\mS_0$ is the identity functor on $\mC$;
\item[2.] $\mS_\alpha\circ\mS_\beta=\mS_{\alpha+\beta}$ for every $\alpha,\beta\in \Gamma$.
\end{itemize}
The set $\mS_\Gamma$ forms an abelian group where multiplication is defined by composition of functors.
The  set $\mS_\Gamma$ is called a {\em suspension set} of $\mC$ and the functor  $\mS_\lambda\in \mS_\Gamma$  the {\em $\lambda$-suspension} in $\mS_\Gamma$.  By a {\em category with suspension}  we mean  a category with exact sequences  equipped with a suspension set. 
\end{Def}
\begin{Def}
Suppose that $\mC$ is a category with suspension, $\mS_\Gamma$ a suspension set on $\mC$, and  $\mP_0$ is a small skeleton of $\mC$. If  the restriction of $\mS_\lambda$ induces an auto-equivalence on $\mP_0$ for each $\lambda\in \Gamma$,  then $\mP_0$ is called a skeleton of $\mC$ with respect to the suspension set $\mS_\Gamma$.
\end{Def}
\begin{Lem}
Let $\mC$ be a category with a suspension $\mS_\Gamma$. Then there is a small skeleton $\mP_0$ of $\mC$ with respected to the suspension $\mS_\Gamma$.
\end{Lem}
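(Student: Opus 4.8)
The plan is to start from the small skeleton furnished by axiom~C2 and to upgrade it to one stable under all the suspensions, the induced functors being produced by transport of structure along the canonical retraction onto the skeleton. Throughout I use that each $\mS_\lambda$, being an exact auto-equivalence of $\mC$, carries the distinguished subcategory $\mP$ into itself (this is part of the suspension structure of Definition~\ref{def:suspension}; in the motivating case $\mP$ is the category of graded projectives and $\mS_\lambda$ the shift, which plainly preserves it), and that $\mS_\lambda\circ\mS_{-\lambda}=\mS_0=\id$, so $\mS_\lambda$ restricts to an auto-equivalence of $\mP$ with quasi-inverse $\mS_{-\lambda}$. In particular $\lambda\cdot[X]:=[\mS_\lambda X]$ is a genuine action of the abelian group $\Gamma$ on the small set of isomorphism classes of objects of $\mP$, since the relations of Definition~\ref{def:suspension} hold strictly.

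First I would fix, by C2, a small skeleton of $\mP$ (this is the skeleton relevant to $K_1$) together with a retraction $r\colon\mP\to\mP_0$ sending each object to the unique skeleton object isomorphic to it, with $r|_{\mP_0}=\id$. For every $\lambda\in\Gamma$ put $\WT{\mS}_\lambda:=r\circ(\mS_\lambda|_{\mP_0})\colon\mP_0\to\mP_0$. Each $\WT{\mS}_\lambda$ is a composite of the equivalence $\mS_\lambda$ with the equivalence $r$, hence an exact auto-equivalence of $\mP_0$ with quasi-inverse $\WT{\mS}_{-\lambda}$; this is exactly the statement that $\mS_\lambda$ induces an auto-equivalence on $\mP_0$ in the sense of the definition preceding the Lemma. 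On objects the group law is automatic: since $r$ is the identity on $\mP_0$ and preserves isomorphism classes, $\WT{\mS}_\alpha\WT{\mS}_\beta X$ and $\WT{\mS}_{\alpha+\beta}X$ are both the skeleton representative of $[\mS_{\alpha+\beta}X]$, hence equal, and $\WT{\mS}_0=\id$. Thus $\Gamma$ acts strictly on $\mathrm{Ob}\,\mP_0$ and $\mP_0$ is a skeleton of $\mC$ with respect to $\mS_\Gamma$.

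The point that needs care — and what I expect to be the only real obstacle — is the compatibility of the chosen representatives with the \emph{whole} group $\Gamma$ at once. For the bare statement any small skeleton works, since $\WT{\mS}_\lambda=r\circ\mS_\lambda$ is defined and is an auto-equivalence irrespective of the choice; but $\mS_\lambda$ does not carry a representative to a representative on the nose (in the graded setting $\mS_\lambda X=X$ already forces $X=0$ for $\lambda\neq0$), so if one wants the family $\{\WT{\mS}_\lambda\}$ to be an honest suspension set on $\mP_0$ — making $\mP_0$ itself a category with suspension, as is desirable for the matrix description — the representatives must be chosen equivariantly. I would do this orbit by orbit for the induced $\Gamma$-action on the set of isomorphism classes: select one class per orbit and an object realising it, and let the skeleton over that orbit consist of the transported objects, the delicate case being an orbit with nontrivial stabiliser $H=\{\lambda\mid\mS_\lambda X\cong X\}$. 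One then fixes a coherent system of transport isomorphisms $\theta_{\lambda,X}\colon\mS_\lambda X\xrightarrow{\sim}\WT{\mS}_\lambda X$, normalised by $\theta_{0,X}=\id$ and satisfying $\theta_{\alpha,\WT{\mS}_\beta X}\circ\mS_\alpha(\theta_{\beta,X})=\theta_{\alpha+\beta,X}$; the object-level relations verified above guarantee such a choice exists along each orbit, and it upgrades the identities to strict equalities $\WT{\mS}_\alpha\circ\WT{\mS}_\beta=\WT{\mS}_{\alpha+\beta}$ on all of $\mP_0$. This strictification is the part I expect to require the most bookkeeping, though it is not needed for the Lemma as literally stated.
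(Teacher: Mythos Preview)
Your argument is correct but takes a genuinely different route from the paper's. The paper's proof is essentially one line: starting from any small skeleton $\mP_1$, it sets
\[
\mP_0=\bigcup_{\lambda\in\Gamma}\mS_\lambda(\mP_1);
\]
since $\mS_\mu(\mP_0)=\bigcup_\lambda \mS_{\mu+\lambda}(\mP_1)=\mP_0$, each $\mS_\lambda$ \emph{literally} restricts to a functor $\mP_0\to\mP_0$, which is then automatically an equivalence. In other words, the paper enlarges the skeleton to its $\Gamma$-saturation so that the given suspensions act on the nose, and the entire argument fits in two sentences.

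You instead keep the original skeleton fixed and transport the suspensions through the retraction $r$, setting $\WT{\mS}_\lambda=r\circ\mS_\lambda|_{\mP_0}$. This is the categorically natural manoeuvre and does produce an auto-equivalence of $\mP_0$ for \emph{any} skeleton, but it reads the phrase ``the restriction of $\mS_\lambda$ induces an auto-equivalence on $\mP_0$'' liberally: your $\WT{\mS}_\lambda$ is not the restriction of $\mS_\lambda$ but a conjugate of it, whereas the paper's construction makes the restriction honest. Your extended discussion of orbit-wise equivariant representatives and coherent transport isomorphisms $\theta_{\lambda,X}$ is, as you acknowledge, unnecessary for the lemma as stated; the paper sidesteps all of it by simply saturating under $\Gamma$. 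What your approach buys is a more careful analysis of the induced suspension structure on the skeleton; what the paper's buys is brevity and a skeleton on which the original $\mS_\lambda$ act without any modification.
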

\begin{proof}
Suppose that $\mP_1$ is a small skeleton of $\mC$. Then 
$$
\mP_0=\bigcup_{\lambda\in \Gamma} \mS_{\lambda}(\mP_1)
$$
which is a small skeleton of $\mC$.  Clearly the restriction of each $\mS_{\lambda}$ on $\mP_0$ is an equivalence. This proves the lemma.
\end{proof}
In the sequel, a small skeleton of $\mC$ will always mean that it respects the corresponding suspension of $\mC$.

\begin{Def}\label{def:K1}
Suppose that $\mathcal C$ is a category with exact sequences which has a small skeleton $\mathcal P_0$. Then $K_1(\mC)$ is defined to be the free abelian group on all pairs 
$(P,\sigma)$ such that $P\in \mathcal P_0$ and $\sigma\in \Auto( P)$,  subject to  the following relations:
\begin{item}
\item[(K1)] $[(P,\sigma)]+[(P,\tau)]=[(P,\sigma\circ\tau)]$;
\item[(K2)] $[(P_1,\sigma_1)]+[(P_2,\sigma_2)]=[(P,\sigma)]$, if there is a commutative diagram 
$$\xymatrix{0\ar[r]&P_1\ar[r]^i\ar[d]_{\sigma_1}&P\ar[r]^{\pi} \ar[d]_{\sigma}&P_2\ar[r]\ar[d]_{\sigma_2}&0\\
0\ar[r]&P_1\ar[r]^i&P\ar[r]^{\pi} &P_2\ar[r]&0
}
$$
in $\mC$ with exact rows.
\end{item}
\end{Def}
We recall the following well-known properties of $K_1(\mC)$.
\begin{Prop}\label{prop:K1}
Suppose that $\mathcal C$ is a category with exact sequences with a small skeleton $\mathcal P_0$. Then the group law, which we shall write additively, on $K_1(\mC)$ satisfies the following:
\begin{itemize}
\item[1.] $[(P_1, \id)]$ is the zero element in $K_1(\mC)$, for each $P_1\in \mP_0$;
\item[2.] the inverse of $[(P_1,\sigma)]$ in $K_1(\mC)$ is $[(P_1,\sigma^{-1})]$;
\item[3.] $[(P_1\oplus P_1, \sigma\oplus\sigma^{-1})]=[(P_1\oplus P_1,\id)]$ is the zero in $K_1(\mC)$.
\end{itemize}
\end{Prop}

Let $\mC$ be a category with  suspension $\mS_\Gamma$. By definition,  $\mS_{\Gamma}$ consists of exact auto-equivalences. Therefore each element $\mS_{\lambda}$ in $\mS_{\Gamma}$ preserves the identities (K1) and (K2) in Definition~\ref{def:K1}. This implies that each $\mS_{\lambda}$ induces an automorphism of $K_1(\mC)$. In  other words,  $\mS_{\Gamma}$ induces  a $\Gamma$-action  on $K_1(\mC)$.
\begin{Lem}\label{lem:sus}
Let $\mC$ be a category with suspension $\mS_\Gamma$. Then there is a group action of $\Gamma$ on $K_1(\mC)$ 
$$
\lambda \cdot [(P,\sigma)]=[(\mS_{\lambda}(P),\mS_{\lambda}(\sigma))],
$$
which is well-defined.
\end{Lem}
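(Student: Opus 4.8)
The plan is to produce, for each $\lambda\in\Gamma$, a well-defined group endomorphism $\mS_\lambda^*$ of $K_1(\mC)$ from the assignment $(P,\sigma)\mapsto(\mS_\lambda(P),\mS_\lambda(\sigma))$ on generators, and then to check that these endomorphisms assemble into an action of $\Gamma$. First I would observe that the formula sends generators to generators. Recall that $K_1(\mC)$ is built on a small skeleton $\mP_0$ chosen to respect the suspension set, so by definition the restriction of each $\mS_\lambda$ is an auto-equivalence of $\mP_0$; since $\mP_0$ is skeletal this restriction is in fact a bijection on objects (fully faithfulness reflects isomorphisms, and in a skeletal category isomorphic objects coincide), whence $\mS_\lambda(P)\in\mP_0$ whenever $P\in\mP_0$. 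As $\mS_\lambda$ is a functor it carries $\sigma\in\Auto(P)$ to $\mS_\lambda(\sigma)\in\Auto(\mS_\lambda(P))$. Thus $(P,\sigma)\mapsto(\mS_\lambda(P),\mS_\lambda(\sigma))$ is a genuine map of the generating set into itself and extends uniquely to an endomorphism $\mS_\lambda^*$ of the free abelian group on those generators.

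Next I would verify that $\mS_\lambda^*$ descends to $K_1(\mC)$, i.e.\ that it carries the defining relations (K1) and (K2) to relations of the same form. For (K1), functoriality of $\mS_\lambda$ gives $\mS_\lambda(\sigma\circ\tau)=\mS_\lambda(\sigma)\circ\mS_\lambda(\tau)$, so the image of the relation $[(P,\sigma)]+[(P,\tau)]-[(P,\sigma\circ\tau)]$ is again an instance of (K1). For (K2), the key point is that each $\mS_\lambda$ is \emph{exact}: applying it to the commutative ladder with exact rows appearing in (K2) produces another commutative ladder with exact rows, now with vertical automorphisms $\mS_\lambda(\sigma_1),\mS_\lambda(\sigma),\mS_\lambda(\sigma_2)$, so the image of a (K2)-relation is itself a (K2)-relation. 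Hence $\mS_\lambda^*$ preserves the subgroup generated by the relations and induces a well-defined homomorphism $\mS_\lambda^*\colon K_1(\mC)\to K_1(\mC)$.

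Finally I would check the action axioms using the group law on $\mS_\Gamma$. Because $\mS_0$ is the identity functor, the formula gives $\mS_0^*=\id$ on $K_1(\mC)$; because $\mS_\alpha\circ\mS_\beta=\mS_{\alpha+\beta}$ as functors, the formula yields $\mS_\alpha^*\circ\mS_\beta^*=\mS_{\alpha+\beta}^*$ directly on generators, hence on all of $K_1(\mC)$. In particular each $\mS_\lambda^*$ is invertible with inverse $\mS_{-\lambda}^*$, so it is an automorphism, and $\lambda\mapsto\mS_\lambda^*$ is a group homomorphism $\Gamma\to\Auto(K_1(\mC))$; this is exactly the asserted action $\lambda\cdot[(P,\sigma)]=[(\mS_\lambda(P),\mS_\lambda(\sigma))]$.

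I expect the only genuinely delicate point to be the bookkeeping in the first step — ensuring that $\mS_\lambda$ truly preserves $\mP_0$ rather than merely preserving it up to isomorphism — which is precisely why the suspension-respecting skeleton was arranged beforehand; once that is in place, everything else reduces to functoriality and exactness of the $\mS_\lambda$.
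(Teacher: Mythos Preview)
Your argument is correct and follows exactly the line the paper takes: the paper simply observes (in the paragraph preceding the lemma, which serves as its proof) that each $\mS_\lambda$, being an exact auto-equivalence, preserves the relations (K1) and (K2) and hence induces an automorphism of $K_1(\mC)$, with the $\Gamma$-action coming from the suspension structure. Your write-up is a careful unpacking of this same reasoning, including the bookkeeping about the suspension-respecting skeleton that the paper arranges beforehand for precisely the reason you identify.
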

It follows immediately that $K_1(\mC)$ of a category $\mC$ with the suspension $\mS_\Gamma$ is not only an abelian group but also a $\mathbb Z[\Gamma]$-module.


\section{Graded rings and graded modules}\label{s:1}
In this section, we briefly review the notions of graded ring and graded module. 
 For details see \cite{Oy1} or \cite{RH2}.

Unless otherwise stated, all rings in the current paper are assumed to be associative with identity $1$, all ring homomorphisms are assumed to be identity preserving, and all the modules in sequel are left modules.

\begin{defn}
Let $A$ be a ring and  $\Gamma$ an abelian group. $A$ is called a  {\em $\Gamma$ graded ring}, if 
\begin{itemize}
\item $A=\oplus_{\lambda\in \Gamma}A_\lambda$, where  each $A_\lambda$ is an additive subgroup of $A$;
\item $A_\lambda A_\delta \subseteq A_{\lambda+\delta}$ for all $\lambda,\delta\in \Gamma$.
\end{itemize}
The none-zero elements in $A_\lambda$ are called {\em the homogeneous elements of level $\lambda$}.  We say  that a ring $A$ has  {\em trivial grading}, if $A$ is graded by the trivial group. 
\end{defn}

In the case that $A_\lambda A_\delta =A_{\lambda+\delta}$ for all $\lambda, \delta \in \Gamma$, we call $A$  a {\em  strongly $\Gamma$  graded ring}. If there is an invertible element in  $A_\lambda$ for each $\lambda\in \Gamma$, then we say that $A$ is a {\em cross product}.  A cross product is always a strongly graded ring, but in general the converse is not true.

Let $A$ be a $\Gamma$ graded ring. A two-sided ideal  $I$ of $A$ is called a {\em graded ideal } if 
$$
I=\bigoplus_{\lambda\in\Gamma }I\cap A_{\lambda}.
$$
We shall denote $I\cap A_{\lambda}$ by $I_\lambda$. It is called the {\em homogeneous component of level} $\lambda$ of $I$.
Then the quotient ring
$$
A/I= \bigoplus_{\lambda\in \Gamma} A_\lambda /I_{\lambda}.
$$
is again a $\Gamma$ graded ring.

Let $A$ and $B$ be two $\Gamma$ graded rings.  Then a {\em $\Gamma$ graded ring homomorphism} $\varphi: A\to B$ is a ring homomorphism  which respects to the $\Gamma$ grading, i.e.,
$$
\varphi (A_\lambda)\subseteq B_{\lambda}
$$
for all $\lambda \in \Gamma$. It is easy to check  the following facts:
\begin{itemize}
 \item[1.] The kernel of  a  graded homomorphism is a graded ideal;
 \item[2.] There is a canonical graded homomorphism  $A \to A/I$ for every graded ideal $I$ of $A$.
 \end{itemize}
 A graded homomorphism is called surjective, injective or bijective if it is so in the usual sense.  A bijective graded homomorphism is called a graded isomorphism and has a graded inverse.
 



 Let $A$ be a $\Gamma$ graded ring. An $A$-module is called a $\Gamma$ {\em graded $A$-module} (or simply a graded $A$-module when the grading is clear from  context), if $M$ is a directly sum   $M=\oplus_{\lambda\in \Gamma} M_\lambda$ of subgroups  and $A_\lambda M_{\delta}\subseteq M_{\lambda+\delta}$ for all $\lambda,\delta\in \Gamma$. We may define  graded submodule, graded quotient module in an obvious way.

Additionally, we have one further operation called {\em suspension} (or {\em shifting}) of a graded module. An {\em $\alpha$-suspended module} $M(\alpha)$ of $M$ is a $\Gamma$ graded module such that 
$$
M(\alpha)= \bigoplus_{\lambda\in \Gamma} M(\alpha)_\lambda,
$$
 where $M(\alpha)_\lambda =M_{\alpha+\lambda}$ and $\alpha\in \Gamma$.

Let $M$ and $N$ be graded $A$-modules and let $\delta\in \Gamma$.  A {\em graded module homomorphism of degree $\delta$} is an $A$-module homomorphism such that 
$\varphi(M_\lambda)\subseteq N_{\lambda+\delta}$ for each $\lambda\in \Gamma$. We denote the set of graded $A$-module homomorphisms of degree $\delta$ from $M$ to $N$   by $\Hom_{gr}(M,N)_\delta$ and the set of graded $A$-module endomorphism of degree $\delta$ on $M$ by  $\End_{gr}(M)_{\delta}$.

By a {\em graded module homomorphism}, we mean a graded module homomorphism of degree $0$. The set of graded $A$-module automorphism on $M$ is denoted by $\Auto_{gr}(M)$.

A  graded $A$-module $M$ is said to be free, if it has a free basis as an $A$-module consisting of homogeneous elements, equivalently  $M\cong_{gr}\oplus_{i\in I} A(\alpha_i)$ where $\{\alpha_i\mid i\in I\}$ is a family\footnote{By a family we mean a collection of elements whose elements can be repeated.} of elements of the grading $\Gamma$. When the basis happens to be finite, we denote  the graded free module $\oplus_{i=1}^{n} A(\alpha_i)$ by $ A(\alpha_1,\cdots,\alpha_n)$. 

$\End_{gr}(A(\alpha_1,\cdots,\alpha_n))_{\delta}$ possesses a  representation by  $n\times n$ matrices  (see \cite[\S 2.10]{Oy1}) :
$$
\End_{gr}(A(\alpha_1,\cdots,\alpha_n))_{\delta}
\cong \left(
\begin{array}{ccccc}
A_{\delta}& A_{\delta+\alpha_1-\alpha_2}& A_{\delta+\alpha_1-\alpha_3}&\cdots& A_{\delta+\alpha_1-\alpha_n}\\
A_{\delta+\alpha_2-\alpha_1}& A_{\delta}&A_{\delta+\alpha_2-\alpha_3}&\cdots& A_{\delta+\alpha_2-\alpha_n}\\
A_{\delta+\alpha_3-\alpha_1}& A_{\delta+\alpha_3-\alpha_2}&A_{\delta}&\cdots& A_{\delta+\alpha_3-\alpha_n}\\
\vdots&\vdots&\vdots&&\vdots\\
A_{\delta+\alpha_n-\alpha_1}& A_{\delta+\alpha_n-\alpha_2}&A_{\delta+\alpha_n-\alpha_3}&\cdots& A_{\delta}\\
\end{array}
\right)
$$
which is denoted by $M_n(A)(\alpha_1,\alpha_2,\cdots,\alpha_n)_{\delta}$. 

In particular, when the degree $\delta=0$, we have
$$
\End_{gr}(A(\alpha_1,\cdots,\alpha_n))_{0}
\cong  \left(
\begin{array}{ccccc}
A_{0}& A_{\alpha_1-\alpha_2}& A_{\alpha_1-\alpha_3}&\cdots& A_{\alpha_1-\alpha_n}\\
A_{\alpha_2-\alpha_1}& A_{0}&A_{\alpha_2-\alpha_3}&\cdots& A_{\alpha_2-\alpha_n}\\
A_{\alpha_3-\alpha_1}& A_{\alpha_3-\alpha_2}&A_{0}&\cdots& A_{\alpha_3-\alpha_n}\\
\vdots&\vdots&\vdots&&\vdots\\
A_{\alpha_n-\alpha_1}& A_{\alpha_n-\alpha_2}&A_{\alpha_n-\alpha_3}&\cdots& A_{0}\\
\end{array}
\right).
$$

Finitely generated projective modules play a crucial in algebraic $K$-theory. In the graded setting, a  graded  $A$-module $P$ is said to be projective, if 
for any diagram of grade homomorphisms with exact row
$$
\xymatrix{&P\ar[d]^i &\\
M\ar[r]^g&N\ar[r]&0,
}
$$
there is a grade $A$-module homomorphism $h: P\to M$ making the resulting diagram commutes.

\begin{Prop}\label{gr-projective}
Let $A$ be a $\Gamma$ graded ring and let $P$ be a graded projective $A$-module. Then the following conditions are equivalent:
\begin{itemize}
\item[1.] $P$ is a direct summand of a graded free $A$-module.
\item[2.] Every short exact sequence of graded $A$-module homomorphisms
$$
0\to M\to N\to P\to 0
$$
is split by  a graded $A$-module homomorphism.
\item[3. ] $Hom_A^{gr}(P, \underline{\phantom{A}} )$ is an exact functor.
\end{itemize}
\end{Prop}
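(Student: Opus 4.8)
The plan is to prove the three conditions equivalent by establishing the cycle $(1)\Rightarrow(3)\Rightarrow(2)\Rightarrow(1)$, mirroring the classical characterisation of projective modules while keeping careful track of the grading and of the fact that every morphism in sight is a graded homomorphism of degree $0$. Throughout I would work in the abelian category of graded $A$-modules with degree-$0$ morphisms, using that the suspensions $M\mapsto M(\alpha)$ are exact auto-equivalences.

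For $(1)\Rightarrow(3)$, I would first treat a single shifted copy of the ring. The key computation is that a degree-$0$ graded homomorphism out of $A(\alpha)$ is determined by the image of $1\in A(\alpha)_{-\alpha}=A_0$, which yields a natural isomorphism
$$
\Hom_A^{gr}(A(\alpha),M)_0\;\cong\; M_{-\alpha},
$$
whose right-hand side is an exact functor of $M$. Passing to finite (or arbitrary) direct sums shows $\Hom_A^{gr}(F,\underline{\phantom{A}})$ is exact for every graded free module $F$. Writing $F\cong_{gr} P\oplus Q$ and using additivity gives a splitting $\Hom_A^{gr}(F,\underline{\phantom{A}})\cong\Hom_A^{gr}(P,\underline{\phantom{A}})\oplus\Hom_A^{gr}(Q,\underline{\phantom{A}})$; since a direct summand of an exact additive functor is again exact, $\Hom_A^{gr}(P,\underline{\phantom{A}})$ is exact.

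For $(3)\Rightarrow(2)$, given a short exact sequence $0\to M\to N\xrightarrow{g}P\to 0$, exactness of $\Hom_A^{gr}(P,\underline{\phantom{A}})$ makes $g_*\colon\Hom_A^{gr}(P,N)\to\Hom_A^{gr}(P,P)$ surjective, so $\id_P$ has a preimage $s\colon P\to N$ with $g\circ s=\id_P$; this $s$ is the desired graded splitting. For $(2)\Rightarrow(1)$, I would invoke the graded analogue of ``every module is a quotient of a free module'': choosing homogeneous generators $x_i\in P_{\alpha_i}$ of $P$ gives a surjective degree-$0$ graded homomorphism $\bigoplus_i A(-\alpha_i)\to P$ sending $1\mapsto x_i$, where the shifts $A(-\alpha_i)$ are precisely what renders the map homogeneous of degree $0$. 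This produces a short exact sequence $0\to K\to F\to P\to 0$ with $F$ graded free, which condition $(2)$ splits, so $F\cong_{gr} K\oplus P$ and $P$ is a graded direct summand of a free module.

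The only genuinely graded-specific steps, and the places where care is needed, are the construction of the graded free cover in $(2)\Rightarrow(1)$ — one must select homogeneous generators and insert the correct suspensions $A(-\alpha_i)$ so that the canonical surjection lives in degree $0$ — and dually the identification $\Hom_A^{gr}(A(\alpha),M)_0\cong M_{-\alpha}$ driving $(1)\Rightarrow(3)$. Everything else is formal and transfers verbatim from the ungraded theory once one records that the ambient category is abelian and that the suspensions are exact.
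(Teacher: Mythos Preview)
Your proof is correct and is precisely the standard cycle $(1)\Rightarrow(3)\Rightarrow(2)\Rightarrow(1)$ adapted to the graded setting; the identification $\Hom_A^{gr}(A(\alpha),M)_0\cong M_{-\alpha}$ and the construction of the graded free cover via homogeneous generators and appropriate shifts are exactly the right graded-specific ingredients. The paper itself does not give a proof of this proposition at all: it simply refers the reader to \cite{RH2}. So there is nothing to compare against, and your argument supplies what the paper omits.
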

\begin{proof}
See \cite{RH2}.
\end{proof}


\section{Category of finitely generated graded  projective modules and its $K_1$.}\label{K_1}\label{s:4}
In this section, we show that the category $\mM A$ of  finitely generated graded projective $A$-modules is a category with suspension.  By analogy with the usual procedures of defining $K_1$ (see \cite{Bass, Rosen, Magurn}),  we construct  stable graded $K_1^{gr}$ for graded rings in two different ways. One is the categorical definition of Section~\ref{sec:cat} and the other is an explicit definition using matrices. We shall show that these two definitions agree with each other.   

Let $A$ be a $\Gamma$ graded ring. The category of finitely generated graded projective $A$-modules $\mM A$  is defined as follows:
\begin{itemize}
\item the objects of $\mM {A} $ are finitely generated graded  projective $A$-modules.
\item the morphisms (arrows) are the graded homomorphisms between graded $A$-modules.
\end{itemize}
It is routine to show that the category $\mM A$ is a category with exact sequences. 

For each $\lambda\in \Gamma$, we define a map $\mS_{\lambda} : \mM A\to \mM A$  in the following way. For each object $M\in \mM A$, set $
\mS_{\lambda}(M)=M(\lambda),
$
and  for each morphism $\varphi: M\to N$ in $\mM A$, set $\mS_{\lambda}(\varphi): M(\lambda)\to N(\lambda)$ such that 
$$\mS_{\lambda}(\varphi)\big( M(\lambda)_\alpha\big) = \varphi (M_{\lambda+\alpha})$$
for every $\alpha\in \Gamma$.
It is easy to see that $\mS_{\lambda} : \mM A \to \mM A$ is an exact auto-equivalence.

Let $\mS_{\Gamma}=\{\mS_{\lambda}\mid\lambda\in \Gamma\}$. Then the category $\mM A$ is a category with the suspension $\mS_{\Gamma}$. This observation allow us to apply the functor $K_1$ to  the category $\mM A$ (see Section~\ref{sec:cat}).  We shall denote $K_1(\mM A )$ by $K_1^{gr}(A)$.

\begin{Lem}\label{functorial}
$K_1^{gr}(A)$ defines a  functor from the category of $\Gamma$ graded rings to the category of $\mathbb Z[\Gamma]$-modules.
\end{Lem}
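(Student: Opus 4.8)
We must show that $K_1^{gr}$ is a functor from $\Gamma$ graded rings to $\mathbb Z[\Gamma]$-modules. By Lemma~\ref{lem:sus} and the remark following it, we already know that for a fixed graded ring $A$ the group $K_1^{gr}(A)=K_1(\mM A)$ carries a $\mathbb Z[\Gamma]$-module structure coming from the suspension $\mS_\Gamma$. So the content of the lemma is entirely about \emph{morphisms}: a graded ring homomorphism $\varphi\colon A\to B$ must induce a homomorphism of $\mathbb Z[\Gamma]$-modules $\varphi_*\colon K_1^{gr}(A)\to K_1^{gr}(B)$, this assignment must respect composition and identities, and the induced maps must be $\Gamma$-equivariant (i.e.\ $\mathbb Z[\Gamma]$-linear, not merely additive).

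**Plan.** The plan is to first produce, from a graded ring homomorphism $\varphi\colon A\to B$, an exact functor $\varphi_\#\colon \mM A\to \mM B$ that intertwines the two suspensions, and then to observe that $K_1$ of categories with suspension is itself functorial for such suspension-preserving exact functors. For the first step I would use base change along $\varphi$: set $\varphi_\#(M)=B\otimes_A M$, where $B$ is regarded as a graded right $A$-module through $\varphi$ (here $\varphi(A_\lambda)\subseteq B_\lambda$ guarantees that $B\otimes_A M$ inherits a $\Gamma$ grading, with $(b\otimes m)$ placed in degree $\mathrm{level}(b)+\mathrm{level}(m)$). Base change carries graded projectives to graded projectives (it sends a graded free module $A(\alpha_1,\dots,\alpha_n)$ to $B(\alpha_1,\dots,\alpha_n)$ and commutes with direct summands), and it is exact on $\mM A$ because every short exact sequence there is split by Proposition~\ref{gr-projective}, so tensoring preserves exactness. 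Crucially, there is a natural graded isomorphism $\varphi_\#(M(\lambda))\cong(\varphi_\#M)(\lambda)$, i.e.\ $\varphi_\#\circ\mS_\lambda\cong\mS_\lambda\circ\varphi_\#$, which is exactly the compatibility with suspension that I will need for $\Gamma$-equivariance.

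**From the functor to the $K_1$ map.** Once $\varphi_\#$ is in hand, I would define $\varphi_*$ on generators by $\varphi_*[(P,\sigma)]=[(\varphi_\#P,\varphi_\#\sigma)]$ (after transporting $\varphi_\#P$ into the chosen skeleton $\mP_0$ of $\mM B$ via the equivalence). Because $\varphi_\#$ is an exact additive functor, it sends the defining relations (K1) and (K2) of Definition~\ref{def:K1} to relations of the same form: it respects composition of automorphisms, hence preserves (K1), and it carries a commutative diagram with exact rows to another such diagram, hence preserves (K2). This shows $\varphi_*$ is a well-defined group homomorphism. Functoriality — $(\psi\circ\varphi)_*=\psi_*\circ\varphi_*$ and $(\mathrm{id}_A)_*=\mathrm{id}$ — then reduces to the natural isomorphisms $(\psi\circ\varphi)_\#\cong\psi_\#\circ\varphi_\#$ and $(\mathrm{id}_A)_\#\cong\mathrm{id}$, which are the standard associativity and unit isomorphisms for base change along composable graded ring maps. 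Finally, $\Gamma$-equivariance follows by applying $\varphi_*$ to the formula in Lemma~\ref{lem:sus}: using the intertwining isomorphism $\varphi_\#\circ\mS_\lambda\cong\mS_\lambda\circ\varphi_\#$ one gets
$$
\varphi_*\big(\lambda\cdot[(P,\sigma)]\big)=\varphi_*[(\mS_\lambda P,\mS_\lambda\sigma)]=[(\mS_\lambda\varphi_\#P,\mS_\lambda\varphi_\#\sigma)]=\lambda\cdot\varphi_*[(P,\sigma)],
$$
so $\varphi_*$ is $\mathbb Z[\Gamma]$-linear.

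**The main obstacle.** The routine parts are the relation-preservation arguments, which are formal consequences of $\varphi_\#$ being exact and additive. The step requiring the most care is verifying that base change really does define an exact, suspension-intertwining \emph{endo}functor landing in the skeleton $\mP_0$: one must check that $B\otimes_A M$ is again finitely generated graded projective (not just that it is graded), and construct the natural isomorphism $B\otimes_A M(\lambda)\cong (B\otimes_A M)(\lambda)$ compatibly with morphisms so that the $\Gamma$-action is genuinely respected. I expect the grading-degree bookkeeping in these two naturality checks — rather than anything conceptually hard — to be where the real work lies.
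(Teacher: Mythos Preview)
Your proposal is correct. The paper itself states Lemma~\ref{functorial} without proof, treating the claim as routine; your base-change argument via $\varphi_\#=B\otimes_A(-)$ is exactly the standard construction one would supply, and the checks you isolate (that $\varphi_\#$ lands in finitely generated graded projectives, is exact on split sequences, and intertwines the suspensions so that $\varphi_*$ is $\mathbb Z[\Gamma]$-linear) are precisely what is needed.
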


Next, we will construct a family of full additive subcategories of $\mM A$, and calculate the operation of  the suspension functor on them.

Let $\{\alpha_1,\cdots, \alpha_n\}$ be a finite family of elements of  $\Gamma$.  The graded free module 
$$A\big(\underbrace{\alpha_1,\cdots,\alpha_n,\alpha_1,\cdots,\alpha_n,\cdots,\alpha_1,\cdots,\alpha_n}_{m \text{ copies of } \alpha_1,\cdots,\alpha_n}\big)$$
is denoted by $A(\alpha_1,\cdots,\alpha_n)^m$. 
Let $\mM A(\alpha_1,\cdots,\alpha_n)$ denote the full subcategory  of $\mM A$, whose objects are all  objects in $\mM A$ which are graded isomorphic to a direct summand of a graded module in $\{A(\alpha_1,\cdots,\alpha_n)^m\mid m\in\mathbb N\}$. Since $\mM A(\alpha_1,\cdots,\alpha_n)$ is a full additive subcategory of $\mM A$,  the group $K_1(\mM A(\alpha_1,\cdots,\alpha_n))$ can be defined by Bass' categorical approach \cite[Definition~VII.1.4]{Bass} and will be denoted by $K_1(A)(\alpha_1,\cdots,\alpha_n)$. 
\begin{Lem}
If $S\subseteq T$ are two finite families of $\Gamma$, then $\mM A(S)$ is a full subcategory of $\mM A(T)$. Furthermore, there is a natural group homomorphism $\varphi: K_1(A)(S)\to K_1(A)(T)$ induced by the inclusion functor from $\mM A(S)$ to $\mM A(T)$.
\end{Lem}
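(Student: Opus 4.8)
The plan is to treat the two assertions separately, deriving the map on $K_1$ from the observation that the inclusion of objects is induced by an exact functor. For the first assertion I would exploit the defining notation for graded free modules. Write $T = S \sqcup R$, where $R = \{\beta_1,\cdots,\beta_k\}$ is the complementary family, so that by the very definition of the symbol $A(\,\cdots)$ one has $A(T)\cong_{gr} A(S)\oplus A(R)$ and hence $A(T)^m\cong_{gr} A(S)^m\oplus A(R)^m$ for every $m\in\mathbb N$. If $P$ is an object of $\mM A(S)$, say $A(S)^m\cong_{gr} P\oplus Q$ for some $m$ and some graded module $Q$, then $A(T)^m\cong_{gr} P\oplus\big(Q\oplus A(R)^m\big)$, which exhibits $P$ as a direct summand of $A(T)^m$. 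Thus every object of $\mM A(S)$ is an object of $\mM A(T)$, and since both are full subcategories of $\mM A$ they have the same morphisms between such objects; therefore $\mM A(S)$ is a full subcategory of $\mM A(T)$.

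For the second assertion I would first record that the inclusion functor $\iota\colon \mM A(S)\to \mM A(T)$ is additive and exact. Both subcategories inherit their exact structure from the ambient abelian category of graded $A$-modules, so a short exact sequence whose terms lie in $\mM A(S)$ is exact in $\mM A(T)$ as well; hence $\iota$ carries short exact sequences to short exact sequences. To place the induced map on generators I would choose the two skeletons compatibly: start with a small skeleton $\mathcal P_0^S$ of $\mM A(S)$ and enlarge it to a small skeleton $\mathcal P_0^T$ of $\mM A(T)$ by adjoining one representative for each isomorphism class of $\mM A(T)$ not already met. This is legitimate because fullness of $\mM A(S)$ in $\mM A(T)$ guarantees that non-isomorphic objects of $\mM A(S)$ stay non-isomorphic in $\mM A(T)$, so that $\mathcal P_0^S\subseteq\mathcal P_0^T$. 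With this choice I set $\varphi\big([(P,\sigma)]\big)=[(P,\sigma)]$, the right-hand pair being read inside $\mM A(T)$; this is meaningful since $P\in\mathcal P_0^S\subseteq\mathcal P_0^T$ and $\Auto(P)$ is unchanged by fullness.

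It then remains to verify that $\varphi$ respects the relations (K1) and (K2) of Definition~\ref{def:K1}, so that it descends to a homomorphism $K_1(A)(S)\to K_1(A)(T)$. Relation (K1) is preserved because $\iota$ is a functor and hence commutes with composition of automorphisms; relation (K2) is preserved because applying the exact functor $\iota$ to a commutative diagram with exact rows in $\mM A(S)$ produces a commutative diagram with exact rows in $\mM A(T)$. The naturality claimed for $\varphi$ is precisely the functoriality of Bass' $K_1$ under exact functors; in particular, for a chain $S\subseteq T\subseteq U$ the resulting maps compose, which is what later makes the family $\{K_1(A)(S)\}$ into a direct system. The only point requiring genuine care rather than routine checking is the skeleton bookkeeping of the previous paragraph: one must ensure the definition is independent of how objects of $\mM A(S)$ are identified with chosen representatives in $\mathcal P_0^T$. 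The compatible choice $\mathcal P_0^S\subseteq\mathcal P_0^T$ sidesteps this, but should one prefer arbitrary skeletons, the independence follows from the standard fact---itself a consequence of (K2)---that isomorphic objects carrying conjugate automorphisms represent the same class in $K_1$.
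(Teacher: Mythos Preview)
Your argument is correct and follows essentially the same route as the paper: the inclusion functor is exact, so it induces the map $[(P,\sigma)]\mapsto[(P,\sigma)]$ on $K_1$. The paper's own proof is a two-line invocation of this fact and does not spell out the first assertion at all; your treatment of the decomposition $A(T)^m\cong_{gr}A(S)^m\oplus A(R)^m$ and the skeleton bookkeeping simply fills in details the paper takes for granted.
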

\begin{proof}
Since the inclusion functor is exact, it induces a map
\begin{eqnarray*}
\varphi: &K_1(A)(S)\to K_1(A)(T)&\\
&\varphi([(P,\sigma )])=[(P,\sigma)],
\end{eqnarray*}
which is clearly a group homomorphism.
\end{proof}

The set of all finite families $S$ of elements of $\Gamma$ forms a diagram of sets under natural inclusion of sets. This diagram is clearly directed. Thus the diagram of all $K_1(A)(S)$, where $S$ ranges over all finite families of elements of $\Gamma$ forms a directed digram of groups and thus has a direct limit $\varinjlim_{S} K_1(A)(S)$.

\begin{The}\label{the:1}
Let $A$ be a $\Gamma$ graded ring. Then there is a $\mathbb Z[\Gamma]$-module isomorphism
$$K_1^{gr}(A)\cong \varinjlim_{S} K_1(A)(S),$$
where the $\mathbb Z[\Gamma]$-action on $\varinjlim_{S} K_1(A)(S)$ is induced from the action on $K_1^{gr}(A)$.
\end{The}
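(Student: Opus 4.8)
The plan is to construct mutually inverse $\mathbb Z[\Gamma]$-module homomorphisms between $K_1^{gr}(A)=K_1(\mM A)$ and $\varinjlim_S K_1(A)(S)$, where $S$ ranges over finite families of elements of $\Gamma$. The key structural observation is that every object $P\in\mM A$, being a finitely generated graded projective module, is a direct summand of some graded free module $A(\alpha_1,\dots,\alpha_n)$, hence $P$ lives in $\mM A(S)$ for the finite family $S=\{\alpha_1,\dots,\alpha_n\}$. Consequently $\mM A=\bigcup_S \mM A(S)$, the union running over the directed system of finite families, and each inclusion functor $\mM A(S)\hookrightarrow\mM A$ is exact. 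This is the precise sense in which $\mM A$ is exhausted by its ``finite pieces''.

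First I would build the map $\Phi:\varinjlim_S K_1(A)(S)\to K_1^{gr}(A)$. Each inclusion functor $\mM A(S)\hookrightarrow\mM A$ is exact, so by the functoriality of Bass' $K_1$ construction (it respects the relations (K1) and (K2) of Definition~\ref{def:K1}) it induces a homomorphism $\varphi_S:K_1(A)(S)\to K_1(\mM A)$ sending $[(P,\sigma)]\mapsto[(P,\sigma)]$. These $\varphi_S$ are visibly compatible with the transition maps $K_1(A)(S)\to K_1(A)(T)$ of the previous Lemma, since both are induced by inclusions and composition of inclusions is inclusion; by the universal property of the direct limit they assemble into a single $\Phi$. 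Next I would build the reverse map $\Psi:K_1^{gr}(A)\to\varinjlim_S K_1(A)(S)$ on generators: a generator $[(P,\sigma)]$ of $K_1(\mM A)$ has $P\in\mM A(S)$ for some finite $S$ (by the exhaustion above) and $\sigma\in\Auto(P)$ an automorphism \emph{in the full subcategory} $\mM A(S)$, so it determines a class in $K_1(A)(S)$ whose image in the limit I declare to be $\Psi([(P,\sigma)])$.

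The main obstacle, and the step deserving genuine care, is showing $\Psi$ is \emph{well-defined}. There are two independence issues. First, the choice of $S$ containing $P$ is not canonical; but if $P\in\mM A(S)$ and $P\in\mM A(S')$ then both sit inside $\mM A(S\cup S')$, and the two images agree in $\varinjlim$ because the transition maps identify them---so the class in the limit is unambiguous. Second, and more delicately, I must check $\Psi$ respects the defining relations (K1) and (K2) of $K_1(\mM A)$: given a relation in $K_1(\mM A)$, all finitely many modules occurring in it lie in a \emph{common} finite piece $\mM A(S)$ (take $S$ to be the union of the finite families witnessing each module), and since $\mM A(S)$ is a \emph{full} subcategory, all the morphisms and exact sequences appearing in the relation already live in $\mM A(S)$; thus the relation holds verbatim in $K_1(A)(S)$, hence in the limit. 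Fullness of the subcategories is the crucial hypothesis making this work.

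Finally, once both maps are defined, $\Phi\circ\Psi=\id$ and $\Psi\circ\Phi=\id$ follow immediately on generators since both compositions send $[(P,\sigma)]\mapsto[(P,\sigma)]$, so $\Phi$ is an isomorphism of abelian groups. It remains to check $\Phi$ is $\mathbb Z[\Gamma]$-linear. The $\Gamma$-action on $K_1(\mM A)$ is $\lambda\cdot[(P,\sigma)]=[(\mS_\lambda(P),\mS_\lambda(\sigma))]$ by Lemma~\ref{lem:sus}. Since $\mS_\lambda(A(\alpha_1,\dots,\alpha_n))\cong_{gr}A(\alpha_1+\lambda,\dots,\alpha_n+\lambda)$, the suspension $\mS_\lambda$ carries the piece $\mM A(S)$ exactly onto $\mM A(S+\lambda)$ as an exact equivalence; I would use this to define the induced action on $\varinjlim_S K_1(A)(S)$ (reindexing $S\mapsto S+\lambda$) and observe that under this definition $\Phi$ intertwines the two actions tautologically, so the isomorphism is $\mathbb Z[\Gamma]$-linear.
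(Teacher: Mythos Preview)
Your proposal is correct and follows essentially the same approach as the paper: the paper builds the single map $\varphi:\varinjlim_S K_1(A)(S)\to K_1^{gr}(A)$ via the inclusion functors and then argues surjectivity (every $[(P,\sigma)]$ lies in some piece) and injectivity (any relation witnessing triviality involves only finitely many generators, hence already holds in some $K_1(A)(S)$), whereas you package the same two observations as the construction and well-definedness of an explicit inverse $\Psi$. Your treatment is slightly more explicit about the role of fullness and about the $\Gamma$-action, but note that since the theorem \emph{defines} the $\mathbb Z[\Gamma]$-action on the limit by transport along the group isomorphism, the $\mathbb Z[\Gamma]$-linearity is automatic and your final paragraph is not strictly needed.
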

\begin{proof}
For each finite family $S$ of elements of $\Gamma$, there is a group homomorphism $K_1(A)(S)\to K_1^{gr}(A)$ defined by sending every class $[(P,\sigma)]\in K_1(A)(S)$ to its corresponding class $[(P,\sigma)]$ in $K_1^{gr}(A)$. From the formalities of direct limits, we get a group homomorphism 
$$\varphi: \varinjlim_{S}K_1(A)(S)\to K_1^{gr}(A).$$ 
Each elements of $K_1^{gr}(A)$ is represented by an isomorphism classes $[(P,\sigma)]$, where the graded module $P$ is  finitely generated and projective. Hence the homomorphism  is clearly surjective. 

On the other hand, $K_1^{gr}(A)$ is the free group generated by isomorphism classes $[(P,\sigma)]$. Let $\displaystyle [(Q,\tau)]\in \varinjlim_{S}K_1(A)(S)$ such that $[(Q,\tau)]$ represents the identity  in $K_1^{gr}(A)$.   Then $ [(Q,\tau)]$ is generated by  finite terms of free generators in $K_1^{gr}(A)$. 
Hence there is a finite family $S$ of elements of $\Gamma$, such that all the given free generators of  $ [(Q,\tau)]$ sit in $K_1(A)(S)$.  Thus  $[(Q,\tau)]$ is already the identity element in $K_1(A)(S)$. Therefore it is the identity element in $\displaystyle\varinjlim_{S} K_1(A)(S)$. This shows that $\varphi$ is injective. Hence the whole theorem.
\end{proof}

\begin{Lem}
Let $\{\alpha_1,\cdots, \alpha_n\}$ be a finite family of  elements of $\Gamma$. For each $\lambda\in \Gamma$, the $\lambda$-suspension $\mS_\lambda$ induces a group isomorphism 
$$\varphi_\lambda: K_1(A)(\alpha_1,\cdots, \alpha_n) \to K_1(A)(\alpha_1+\lambda,\cdots, \alpha_n+\lambda).$$
\end{Lem}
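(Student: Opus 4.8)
The plan is to show that the exact auto-equivalence $\mS_\lambda$ on $\mM A$ restricts to an exact equivalence of categories between $\mM A(\alpha_1,\cdots,\alpha_n)$ and $\mM A(\alpha_1+\lambda,\cdots,\alpha_n+\lambda)$, and then to invoke the general principle that an exact equivalence of categories with exact sequences induces an isomorphism on $K_1$.

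First I would record the elementary computation $A(\alpha)(\lambda)\cong_{gr} A(\alpha+\lambda)$, which is immediate from the definition of suspension since $\big(A(\alpha)(\lambda)\big)_\delta = A(\alpha)_{\lambda+\delta}=A_{\alpha+\lambda+\delta}=A(\alpha+\lambda)_\delta$. Consequently $\mS_\lambda\big(A(\alpha_1,\cdots,\alpha_n)^m\big)\cong_{gr} A(\alpha_1+\lambda,\cdots,\alpha_n+\lambda)^m$ for every $m$. Because $\mS_\lambda$ is an additive equivalence, it carries graded isomorphisms to graded isomorphisms and direct summands to direct summands; hence any object of $\mM A(\alpha_1,\cdots,\alpha_n)$, being a summand of some $A(\alpha_1,\cdots,\alpha_n)^m$, is sent to a summand of $A(\alpha_1+\lambda,\cdots,\alpha_n+\lambda)^m$, that is, to an object of $\mM A(\alpha_1+\lambda,\cdots,\alpha_n+\lambda)$. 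Thus $\mS_\lambda$ restricts to a functor between the two subcategories, and applying the same reasoning to $\mS_{-\lambda}$, together with $\mS_\lambda\circ\mS_{-\lambda}=\mS_0=\id$, shows this restriction is an equivalence.

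Next I would observe that the restriction is exact, since $\mS_\lambda$ is exact on all of $\mM A$ and the two subcategories inherit their exact structure from $\mM A$. With an exact equivalence in hand, the induced map on $K_1$ is defined on generators by $\varphi_\lambda([(P,\sigma)])=[(\mS_\lambda(P),\mS_\lambda(\sigma))]$, in the spirit of Lemma~\ref{lem:sus}; the right-hand side is legitimate because $\mS_\lambda(P)$ lies in $\mM A(\alpha_1+\lambda,\cdots,\alpha_n+\lambda)$ and $\mS_\lambda(\sigma)$ is an automorphism of it. To see that this respects the relations of Definition~\ref{def:K1}, I would check that $\mS_\lambda$ is compatible with composition, so that $\mS_\lambda(\sigma\circ\tau)=\mS_\lambda(\sigma)\circ\mS_\lambda(\tau)$ preserves (K1), and that $\mS_\lambda$ sends a commutative diagram with exact rows to one of the same form, preserving (K2); both follow at once from functoriality and exactness.

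Finally I would identify the inverse: the homomorphism $\varphi_{-\lambda}$ induced by $\mS_{-\lambda}$ runs in the opposite direction, and since $\mS_\lambda\circ\mS_{-\lambda}=\mS_{-\lambda}\circ\mS_\lambda$ is the identity functor, the composites $\varphi_\lambda\varphi_{-\lambda}$ and $\varphi_{-\lambda}\varphi_\lambda$ act as the identity on generators and hence are the identity homomorphisms. Therefore $\varphi_\lambda$ is a group isomorphism. I do not anticipate a serious obstacle; the only point requiring care is the bookkeeping around skeletons, namely ensuring that the chosen small skeletons of the two subcategories are matched by $\mS_\lambda$ (or, equivalently, working directly with the presentation of $K_1$ and verifying independence of the chosen representative), which is exactly what the existence of a skeleton respecting the suspension, established earlier, provides.
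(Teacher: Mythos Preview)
Your proposal is correct and follows essentially the same approach as the paper's proof: show that $\mS_\lambda$ carries $\mM A(\alpha_1,\cdots,\alpha_n)$ to $\mM A(\alpha_1+\lambda,\cdots,\alpha_n+\lambda)$, hence induces a homomorphism $\varphi_\lambda$ on $K_1$, and then use $\varphi_{-\lambda}$ induced by $\mS_{-\lambda}$ together with $\mS_\lambda\circ\mS_{-\lambda}=\mS_0=\id$ to conclude that both composites are the identity. Your version supplies more detail than the paper does (the explicit identification $A(\alpha)(\lambda)\cong_{gr}A(\alpha+\lambda)$, the direct-summand argument, and the verification of (K1)--(K2)), but the argument is the same.
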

\begin{proof}
Each $\lambda$-suspension $\mS_\lambda$  is an exact auto-equivalence on $\mM A$, and the image of $\mM A(\alpha_1,\cdots, \alpha_n)$ under functor $\mS_\lambda$ is $\mM A(\alpha_1+\lambda,\cdots, \alpha_n+\lambda)$. Therefore $\mS_\lambda$ induces a surjective group homomorphism 
$$\varphi_\lambda: K_1(A)(\alpha_1,\cdots, \alpha_n) \to K_1(A)(\alpha_1+\lambda,\cdots, \alpha_n+\lambda).$$ 
Conversely, $\mS_{-\lambda}$ induces a surjective group homomorphism
$$
\varphi_{-\lambda}:  K_1(A)(\alpha_1+\lambda,\cdots, \alpha_n+\lambda) \to K_1(A)(\alpha_1,\cdots, \alpha_n).
$$
By Definition~\ref{def:suspension}, the compositions  $\varphi_{\lambda}\circ\varphi_{-\lambda}$ and $\varphi_{-\lambda}\circ\varphi_{\lambda}$ are identity homomorphisms on  $K_1(A)(\alpha_1+\lambda,\cdots, \alpha_n+\lambda)$ and  $K_1(A)(\alpha_1,\cdots, \alpha_n)$, respectively. Hence $\varphi_{\lambda}$ is an isomorphism and this finishes the proof.
\end{proof}



\section{The matrix description of  $K_1^{gr}$.}\label{matrix}
We introduce the notions of elementary and congruence subgroups of stable general linear groups and   construct an explicit matrix description of graded  $K_1$ of graded rings.

Let $A$ be a $\Gamma$-graded ring with identity  as usual. 

\begin{Def} Let $A$ be a $\Gamma$ graded ring and let $\{\alpha_1,\alpha_2,\cdots,\alpha_n\}$ be a family of elements of $\Gamma$.  The set of all  invertible matrices in $M_n(A)(\alpha_1,\alpha_2,\cdots,\alpha_n)_0$ is called the {\em nonstable graded general linear group with parameter $(\alpha_1,\alpha_2,\cdots,\alpha_n)$} and will be  denoted by
$$
\GL_n(A)(\alpha_1,\alpha_2,\cdots,\alpha_n).
$$
\end{Def}

\begin{Def}
Let $A$ be a $\Gamma$ graded ring and let $\{\alpha_1,\alpha_2,\cdots,\alpha_n\}$ be a family of elements of $\Gamma$. The {\em  nonstable graded elementary subgroup of rank $n$} of $\GL_n(A)(\alpha_1,\cdots, \alpha_n)$ is defined as follows:
$$\E_n(A)(\alpha_1,\cdots, \alpha_n)=\langle e_{i,j}(r)\mid r\in A_{\alpha_i-\alpha_j}, i\ne j \rangle,$$
where $e_{i,j}(r)$ is an elementary matrix in the usual sense. Let $I$ be a graded two-sided ideal of $A$. Let $\varphi_I$ be the canonical map $A\to A/I$. It induces a group homomorphism, which is again denoted by $\varphi_I$, 
$$
\varphi_I: \GL_n(A)(\alpha_1,\cdots, \alpha_n)\to \GL_n(A/I)(\alpha_1,\cdots, \alpha_n).
$$ 
The kernel of $\varphi_I$ is called the {\em congruence subgroup $\GL_n(A,I)(\alpha_1,\cdots, \alpha_n)$ of level $I$} of 
$\GL_n(A)(\alpha_1,\cdots, \alpha_n)$.
The {\em graded elementary subgroup} $\E_n(I)(\alpha_1,\cdots, \alpha_n)$ of level $I$  is the  subgroup of  $\E_n(A)(\alpha_1,\cdots, \alpha_n)$ which is  generated by all $e_{i,j}(r)$ with $r\in I_{\alpha_j-\alpha_i}$. The minimal normal subgroup of $\E_n(A)(\alpha_1,\cdots, \alpha_n)$ containing $\E_n(I)(\alpha_1,\cdots, \alpha_n)$ is called the {\em graded relative elementary subgroup of level $I$} and is denoted by $\E_n(A,I)(\alpha_1,\cdots, \alpha_n)$.
\end{Def}
For every $m\in \mathbb N$, we embed  $\GL_{nm}(A)(\alpha_1,\cdots,\alpha_n)^m$ into $\GL_{n(m+1)}(A)(\alpha_1,\cdots,\alpha_n)^{m+1}$ by the  homomorphism $\varphi_{m}$ defined by
$$\begin{array}{lrcl} \varphi_{m}: &\GL_{nm}(A)(\alpha_1,\cdots,\alpha_n)^m&\to& \GL_{n(m+1)}(A)(\alpha_1,\cdots,\alpha_n)^{m+1}\\
&\varphi_{m}(g)&=&\left(
\begin{array}{cc}
g&0\\0&I_{n}
\end{array}
\right).
\end{array}
$$
The infinite union of the $\GL_{nm}(A)(\alpha_1,\cdots,\alpha_n)^m$ is denoted by $\GL(A)(\alpha_1,\cdots,\alpha_n)$. In the same way,  the infinite unions of $\GL_{nm}(A,I)(\alpha_1,\cdots, \alpha_n)^m$, $\E_{nm}(A)(\alpha_1,\cdots,\alpha_n)^m$ and $\E_{nm}(A,I)(\alpha_1,\cdots, \alpha_n)^{m}$ are denoted by $\GL(A,I)(\alpha_1,\cdots, \alpha_n)$, $\E(A)(\alpha_1,\cdots,\alpha_n)$ and \linebreak $\E(A,I)(\alpha_1,\cdots, \alpha_n)$ respectively.

In general, $\E_n(A)(\alpha_1,\cdots,\alpha_n)$  is not necessarily to be perfect, even when $n\ge 3$.

\begin{Exp}
Let $R$ be a commutative ring with a non-trivial ideal $I$. Then we construct a ring 
$A=(R,I)$ where addition is defined to be component-wise and the multiplication is defined as follows
$$
(r_1,a_1)(r_2,a_2)=(r_1r_2,r_1a_2+r_2a_1).
$$
The ring $A$ has a $\mathbb Z_3$ grading, namely
$$
A=(R,I)=(R,0)\oplus (0,I)\oplus (0,0),
$$
where $(R,0)$ is the homogeneous component of degree $0$, $(0,I)$ is the homogeneous component of degree $1$, and  the homogeneous component of degree $2$ is the trivial group.

Consider $\E_{3}(A)(0,1,2)$. A straight forward calculation shows that 
$$
[\E_{3}(A)(0,1,2),\E_{3}(A)(0,1,2)]\neq\E_{3}(A)(0,1,2).
$$

\end{Exp}

 However $\E_n(A)(\alpha_1,\cdots,\alpha_n)$ is indeed perfect, when $A$ happens to be a strongly graded ring with $n\ge 3$.
\begin{Lem}\label{perfect-1}
If $A$ is a strongly  $\Gamma$ graded ring, then $\E_n(A)(\alpha_1,\cdots,\alpha_n)$ is a perfect subgroup for any finite family $\{\alpha_1,\cdots,\alpha_n\}$ of elements of  $\Gamma$ and $n\ge 3$.
\end{Lem}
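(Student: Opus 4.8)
The plan is to show that every generator $e_{i,j}(r)$ of $\E_n(A)(\alpha_1,\cdots,\alpha_n)$ already lies in the commutator subgroup $[\E_n(A)(\alpha_1,\cdots,\alpha_n),\E_n(A)(\alpha_1,\cdots,\alpha_n)]$; since the reverse inclusion is automatic, this gives perfectness. The engine is the Steinberg commutator identity $[e_{i,j}(a),e_{j,k}(b)]=e_{i,k}(ab)$, valid whenever $i,j,k$ are pairwise distinct, together with the additivity relation $e_{i,k}(x+y)=e_{i,k}(x)\,e_{i,k}(y)$ (which holds because $E_{ik}^2=0$ for $i\ne k$).

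First I would verify that these identities are compatible with the grading. If $a\in A_{\alpha_i-\alpha_j}$ and $b\in A_{\alpha_j-\alpha_k}$, then $ab\in A_{\alpha_i-\alpha_j}A_{\alpha_j-\alpha_k}\subseteq A_{(\alpha_i-\alpha_j)+(\alpha_j-\alpha_k)}=A_{\alpha_i-\alpha_k}$, so $e_{i,j}(a)$, $e_{j,k}(b)$ and their commutator $e_{i,k}(ab)$ are all legitimate generators of $\E_n(A)(\alpha_1,\cdots,\alpha_n)$. Thus the commutator identity respects the parameter $(\alpha_1,\cdots,\alpha_n)$.

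Now fix a generator $e_{i,k}(r)$ with $r\in A_{\alpha_i-\alpha_k}$ and $i\ne k$. Since $n\ge 3$, there is an index $j$ distinct from both $i$ and $k$. The strongly graded hypothesis gives the equality $A_{\alpha_i-\alpha_j}A_{\alpha_j-\alpha_k}=A_{\alpha_i-\alpha_k}$, so $r$ can be written as a finite sum $r=\sum_{t=1}^m a_t b_t$ with $a_t\in A_{\alpha_i-\alpha_j}$ and $b_t\in A_{\alpha_j-\alpha_k}$. Applying additivity followed by the commutator identity,
$$e_{i,k}(r)=\prod_{t=1}^m e_{i,k}(a_t b_t)=\prod_{t=1}^m \big[e_{i,j}(a_t),\,e_{j,k}(b_t)\big],$$
which exhibits $e_{i,k}(r)$ as a product of commutators in $\E_n(A)(\alpha_1,\cdots,\alpha_n)$. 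As such generators generate the whole group, perfectness follows.

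The only real content is the decomposition step, and this is precisely where the strongly graded hypothesis is indispensable: for an arbitrary $\Gamma$ graded ring one has merely the inclusion $A_{\alpha_i-\alpha_j}A_{\alpha_j-\alpha_k}\subseteq A_{\alpha_i-\alpha_k}$, and the Example preceding the lemma shows this can be strict, which is exactly what obstructs perfectness there. The hypothesis $n\ge 3$ is equally essential, since it is what guarantees the intermediate index $j$. Both ingredients combine cleanly, so beyond the grading bookkeeping I expect no genuine difficulty.
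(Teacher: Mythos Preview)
Your proposal is correct and follows essentially the same route as the paper: pick an intermediate index (available because $n\ge 3$), use the strongly graded hypothesis to factor $r$ as a finite sum of products of homogeneous elements of the appropriate degrees, and then apply additivity of elementary matrices together with the Steinberg commutator identity $[e_{i,j}(a),e_{j,k}(b)]=e_{i,k}(ab)$ to exhibit the generator as a product of commutators. The only cosmetic difference is the labeling of the intermediate index; your added verification that the Steinberg identity respects the grading and your discussion of why both hypotheses are needed are helpful but not present in the paper.
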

\begin{proof}
Suppose that $e_{i,j}(r)$ is a typical generator of $ \E_n(A)(\alpha_1,\cdots,\alpha_n)$ with $r\in A_{\alpha_i-\alpha_j}$. Since $A$ is strongly graded, 
$$
A_{\alpha_i -\alpha_k}A_{\alpha_k-\alpha_j}=A_{\alpha_i-\alpha_j}
$$
for any $0\le k\le n$. By the assumption $n\ge 3$,  we may choose $k\ne i , j$. There is a decomposition 
$$ 
r= s_1t_1+s_2t_2+\cdots+s_mt_m
$$
with $s_l\in A_{\alpha_i-\alpha_k}$ and $t_l\in A_{\alpha_k-\alpha_j}$ for $l=1,\cdots, m$. We obtain
$$
e_{i,j}(r)=\prod_{l=1}^m e_{i,j}(s_lt_l). 
$$
But for every $l$, we have
$$
e_{i,j}(s_lt_l)=[e_{i,k}(s_l), e_{k,j}(t_l)],
$$
where $e_{i,k}(s_l)$ and  $e_{k,j}(t_l)$ are in $\E_n(A)(\alpha_1,\cdots,\alpha_n)$.
It follows immediately that 
$$e_{i,j}(r)\in [\E_n(A)(\alpha_1,\cdots,\alpha_n), \E_n(A)(\alpha_1,\cdots,\alpha_n)].$$ 
Hence 
$$
\E_n(A)(\alpha_1,\cdots,\alpha_n) =[\E_n(A)(\alpha_1,\cdots,\alpha_n), \E_n(A)(\alpha_1,\cdots,\alpha_n)].
$$
This finishes the proof.
\end{proof}
On the other hand, a stable elementary subgroup is always  perfect and normal in its corresponding stable general linear group.
\begin{Lem}\label{perfect-2}
$\E(A)(\alpha_1,\cdots,\alpha_n)$ is a perfect group.
\end{Lem}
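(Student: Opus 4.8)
The plan is to reduce perfectness to a statement about the standard generators: it suffices to show that each generator $e_{ij}(r)$ of $\E(A)(\alpha_1,\cdots,\alpha_n)$ already lies in the commutator subgroup $[\E(A)(\alpha_1,\cdots,\alpha_n),\E(A)(\alpha_1,\cdots,\alpha_n)]$, since these generators generate the whole group and the commutator subgroup is normal, so this would force $\E(A)(\alpha_1,\cdots,\alpha_n)=[\E(A)(\alpha_1,\cdots,\alpha_n),\E(A)(\alpha_1,\cdots,\alpha_n)]$. The essential difference from Lemma~\ref{perfect-1} is that here no strong-grading assumption is available, so I cannot split a homogeneous $r$ as a sum of products through an auxiliary degree. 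Instead I would exploit the fact that the stable group has coordinates to spare, and use the identity $1\in A_0$ as one of the two commutator factors.

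To set up coordinates, write $\gamma_p$ for the degree carried by the $p$-th coordinate of the stable module, so that the sequence $(\gamma_p)_{p\ge 1}$ is the periodic repetition $\alpha_1,\cdots,\alpha_n,\alpha_1,\cdots,\alpha_n,\cdots$ of period $n$; thus $\gamma_{p+n}=\gamma_p$ for all $p$, and a generator of $\E(A)(\alpha_1,\cdots,\alpha_n)$ has the form $e_{ij}(r)$ with $i\ne j$ and $r\in A_{\gamma_i-\gamma_j}$. Given such a generator, I would choose an index $k\notin\{i,j\}$ with $\gamma_k=\gamma_j$; periodicity guarantees infinitely many such $k$ (for instance any $k\equiv j\pmod n$ lying in a sufficiently high block), and in the direct union $\E(A)(\alpha_1,\cdots,\alpha_n)$ there is always room to pass to a block large enough to contain it. For this $k$ both elementary matrices $e_{ik}(r)$ and $e_{kj}(1)$ are well defined and lie in $\E(A)(\alpha_1,\cdots,\alpha_n)$: indeed $r\in A_{\gamma_i-\gamma_k}=A_{\gamma_i-\gamma_j}$ since $\gamma_k=\gamma_j$, and $1\in A_{\gamma_k-\gamma_j}=A_0$.

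The Steinberg commutator relation for distinct indices (the same identity used in Lemma~\ref{perfect-1}) then gives
$$
e_{ij}(r)=[e_{ik}(r),\,e_{kj}(1)],
$$
exhibiting $e_{ij}(r)$ as a commutator of two elements of $\E(A)(\alpha_1,\cdots,\alpha_n)$, which completes the reduction. I expect the only genuine subtlety to be the bookkeeping for the auxiliary index $k$: one must verify that $k$ can be taken distinct from both $i$ and $j$ while respecting the periodic degree constraint $\gamma_k=\gamma_j$, and that enlarging the block to accommodate $k$ does not move $e_{ij}(r)$ out of the group. Both are immediate from stability (the infinite union) together with the period-$n$ repetition of the degrees, and this is precisely the feature that substitutes for the strong-grading hypothesis of Lemma~\ref{perfect-1}.
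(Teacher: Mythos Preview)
Your proposal is correct and follows essentially the same approach as the paper: both exploit the periodicity $\gamma_{p+n}=\gamma_p$ to pick an auxiliary index $k$ with $\gamma_k=\gamma_j$ (the paper makes the specific choice $k=n+j$, passing from one block to the next) so that $1\in A_{\gamma_k-\gamma_j}=A_0$, and then invoke the commutator identity $e_{ij}(r)=[e_{ik}(r),e_{kj}(1)]$. Your bookkeeping for the stable group is in fact slightly more explicit than the paper's, which phrases the step only for generators in the first block $\E_n(A)(\alpha_1,\cdots,\alpha_n)$.
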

\begin{proof}
We prove the lemma by showing that the image of $\E_n(A)(\alpha_1,\cdots,\alpha_n)$ under the stablizing homomorphism is a subgroup of  the commutator subgroup $$[\E_{2n}(A)(\alpha_1,\cdots,\alpha_n)^2,\E_{2n}(A)(\alpha_1,\cdots,\alpha_n)^2].$$ 

Let  $e_{i,j}(r)$ be a generator of $\E_n(A)(\alpha_1,\cdots,\alpha_n)$.   We have the identity
$$
e_{i,j}(r)=[e_{i,n+j}(r), e_{n+j,j}(1)]
$$
where $e_{i,n+j}(r)$ and $e_{n+j,j}(1)$  belong to $\E_{2n}(A)(\alpha_1,\cdots,\alpha_n)^2$. Hence
$$e\in[\E_{2n}(A)(\alpha_1,\cdots,\alpha_n)^2,\E_{2n}(A)(\alpha_1,\cdots,\alpha_n)^2].$$
Therefore,
$$
\E(A)(\alpha_1,\cdots,\alpha_n)=[\E(A)(\alpha_1,\cdots,\alpha_n),\E(A)(\alpha_1,\cdots,\alpha_n)],
$$
which proves the lemma.
\end{proof}

\begin{Lem}\label{[gg]=e} For any $h\in \GL_{n}(A)(\alpha_1,\cdots,\alpha_n)$, we have
$$
\left(\begin{array}{cc}h&0\\0&h^{-1}\end{array}\right)
\in \E_{2n}(A)(\alpha_1,\cdots,\alpha_n)^2.$$ 
\end{Lem}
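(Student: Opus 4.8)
The plan is to reproduce, in the graded setting, the classical factorization of $\diag(h,h^{-1})$ as a product of elementary matrices, checking at each step that every entry lands in the homogeneous component prescribed by the grading. First I would record the matrix identity
\begin{equation*}
\begin{pmatrix} h & 0 \\ 0 & h^{-1} \end{pmatrix}
= \begin{pmatrix} 1 & h \\ 0 & 1 \end{pmatrix}
\begin{pmatrix} 1 & 0 \\ -h^{-1} & 1 \end{pmatrix}
\begin{pmatrix} 1 & h \\ 0 & 1 \end{pmatrix}
\begin{pmatrix} 0 & -1 \\ 1 & 0 \end{pmatrix},
\end{equation*}
where each block is $n\times n$; a direct block multiplication verifies this over any ring. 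It then suffices to show that all four factors lie in $\E_{2n}(A)(\alpha_1,\cdots,\alpha_n)^2$.

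The crucial point is the grading on the doubled module: the parameter for $\GL_{2n}(A)(\alpha_1,\cdots,\alpha_n)^2$ is the repeated family $(\alpha_1,\cdots,\alpha_n,\alpha_1,\cdots,\alpha_n)$, so position $n+k$ carries exactly the same degree $\alpha_k$ as position $k$. Consequently, for the upper-triangular factor the $(i,n+j)$-entry is $h_{ij}\in A_{\alpha_i-\alpha_j}=A_{\alpha_i-\alpha_{n+j}}$, so this factor is the product of the admissible generators $e_{i,n+j}(h_{ij})$ and hence lies in $\E_{2n}(A)(\alpha_1,\cdots,\alpha_n)^2$. Since $h^{-1}$ is again a degree-$0$ graded-invertible matrix, its entries satisfy $(h^{-1})_{ij}\in A_{\alpha_i-\alpha_j}$, and the same degree-matching shows that the lower-triangular factor is the product of the generators $e_{n+i,j}\bigl(-(h^{-1})_{ij}\bigr)$, again lying in $\E_{2n}(A)(\alpha_1,\cdots,\alpha_n)^2$.

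For the final (Weyl, or ``flip'') factor I would use the standard factorization
\begin{equation*}
\begin{pmatrix} 0 & -I_n \\ I_n & 0 \end{pmatrix}
= \begin{pmatrix} I_n & -I_n \\ 0 & I_n \end{pmatrix}
\begin{pmatrix} I_n & 0 \\ I_n & I_n \end{pmatrix}
\begin{pmatrix} I_n & -I_n \\ 0 & I_n \end{pmatrix},
\end{equation*}
whose factors are block-triangular with off-diagonal blocks $\pm I_n$; the relevant entries are the $\pm 1$ sitting at positions $(i,n+i)$ and $(n+i,i)$, and since these positions have equal degree $\alpha_i$ the required homogeneity $\pm 1\in A_{\alpha_i-\alpha_i}=A_0$ holds automatically. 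Thus the flip too is a product of admissible graded elementary generators, and combining the four factors yields the claim.

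I expect the only genuine obstacle to be the bookkeeping of degrees: verifying that each entry in the four factors really lies in the homogeneous component dictated by the parameter $(\alpha_1,\cdots,\alpha_n,\alpha_1,\cdots,\alpha_n)$. Once one observes that doubling with the repeated parameter forces positions $k$ and $n+k$ to share the degree $\alpha_k$, all the degree constraints collapse to the ungraded identities, and the classical argument carries over verbatim.
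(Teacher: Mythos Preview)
Your proposal is correct and follows exactly the same approach as the paper: the paper writes down the identical four-factor decomposition and then simply appeals to a standard argument (citing Rosenberg) for why each factor lies in $\E_{2n}(A)(\alpha_1,\cdots,\alpha_n)^2$. Your version spells out the degree bookkeeping that the paper leaves implicit, in particular the observation that positions $k$ and $n+k$ carry the same degree $\alpha_k$, and explicitly factors the flip matrix; but these are precisely the details hidden behind the paper's citation, not a different route.
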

\begin{proof}
We have the matrix equation
$$
\left(\begin{array}{cc}h&0\\0&h^{-1}\end{array}\right)=\left(\begin{array}{cc}I_n&h\\0&I_n\end{array}\right)
\left(\begin{array}{cc}I_n&0\\-h^{-1}&I_n\end{array}\right)\left(\begin{array}{cc}I_n&h\\0&I_n\end{array}\right)\left(\begin{array}{cc}0&-I_n\\I_n&0\end{array}\right).
$$
By a standard argument, for example that in \cite[Corollary 2.1.3]{Rosen}, all  four factors in the above equation lie in $\E_{2n}(A)(\alpha_1,\cdots,\alpha_n)^2$. 
\end{proof}
\begin{The}[Graded Whitehead Lemma]\label{White}
Let $A$ be a $\Gamma$ graded ring. Then 
$$[\GL(A)(\alpha_1,\cdots,\alpha_n),\GL(A)(\alpha_1,\cdots,\alpha_n)]= \E(A)(\alpha_1,\cdots,\alpha_n).$$
Therefore $\E(A)(\alpha_1,\cdots,\alpha_n)$ is a normal subgroup of $\GL(A)(\alpha_1,\cdots,\alpha_n)$.
\end{The}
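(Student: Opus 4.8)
The plan is to prove the asserted equality by establishing the two opposite inclusions separately, and then to deduce the normality statement as a purely formal consequence of the fact that the right-hand side is realized as a commutator subgroup. Throughout I abbreviate $\GL(A)(\alpha_1,\cdots,\alpha_n)$ and $\E(A)(\alpha_1,\cdots,\alpha_n)$ and their finite-level versions, and I regard everything inside the stable groups via the standard stabilizing embeddings introduced above.

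First I would dispose of the inclusion $\E(A)(\alpha_1,\cdots,\alpha_n)\subseteq[\GL(A)(\alpha_1,\cdots,\alpha_n),\GL(A)(\alpha_1,\cdots,\alpha_n)]$. This direction is essentially free: Lemma~\ref{perfect-2} already asserts that the stable elementary group is perfect, that is, $\E(A)(\alpha_1,\cdots,\alpha_n)=[\E(A)(\alpha_1,\cdots,\alpha_n),\E(A)(\alpha_1,\cdots,\alpha_n)]$. Since $\E(A)(\alpha_1,\cdots,\alpha_n)\subseteq\GL(A)(\alpha_1,\cdots,\alpha_n)$, every commutator of elementary matrices is in particular a commutator of invertible matrices, so $[\E,\E]\subseteq[\GL,\GL]$, and the inclusion follows at once.

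The substantive direction is $[\GL(A)(\alpha_1,\cdots,\alpha_n),\GL(A)(\alpha_1,\cdots,\alpha_n)]\subseteq\E(A)(\alpha_1,\cdots,\alpha_n)$. It suffices to show that a single commutator $[g,h]=ghg^{-1}h^{-1}$, with $g,h\in\GL_n(A)(\alpha_1,\cdots,\alpha_n)$, already lands in $\E$ after stabilization. The key is the block identity
$$
\left(\begin{array}{cc}[g,h]&0\\0&I_n\end{array}\right)=\left(\begin{array}{cc}gh&0\\0&(gh)^{-1}\end{array}\right)\left(\begin{array}{cc}g^{-1}&0\\0&g\end{array}\right)\left(\begin{array}{cc}h^{-1}&0\\0&h\end{array}\right),
$$
which one checks by direct multiplication (the lower-right block of the product collapses to $(gh)^{-1}gh=I_n$). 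Each of the three factors on the right has the form $\diag(a,a^{-1})$ with $a\in\GL_n(A)(\alpha_1,\cdots,\alpha_n)$, so by Lemma~\ref{[gg]=e} each lies in $\E_{2n}(A)(\alpha_1,\cdots,\alpha_n)^2$. Hence their product, namely the stabilized image of $[g,h]$, lies in $\E_{2n}(A)(\alpha_1,\cdots,\alpha_n)^2\subseteq\E(A)(\alpha_1,\cdots,\alpha_n)$. Under the stabilizing embedding this image is identified with $[g,h]$ itself, giving $[g,h]\in\E(A)(\alpha_1,\cdots,\alpha_n)$, and therefore the whole commutator subgroup is contained in $\E$. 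Combining the two inclusions yields the equality.

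Finally, the normality assertion requires no separate argument: having just identified $\E(A)(\alpha_1,\cdots,\alpha_n)$ with the commutator subgroup $[\GL(A)(\alpha_1,\cdots,\alpha_n),\GL(A)(\alpha_1,\cdots,\alpha_n)]$, it is normal in $\GL(A)(\alpha_1,\cdots,\alpha_n)$ because the commutator subgroup of any group is a normal (indeed characteristic) subgroup. The only point demanding genuine care is the graded bookkeeping: one must verify that the block matrices appearing in the identity are legitimate elements of the correct graded groups with the repeated parameter $(\alpha_1,\cdots,\alpha_n,\alpha_1,\cdots,\alpha_n)$, so that the degree-$0$ homogeneity of the entries is preserved when $a$ is placed in one diagonal block and $a^{-1}$ in the other. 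This graded compatibility, however, is exactly what Lemma~\ref{[gg]=e} has already encapsulated, so the main obstacle is absorbed into an earlier result and the present proof reduces to assembling the formal pieces.
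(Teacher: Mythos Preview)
Your proof is correct and follows essentially the same approach as the paper: the same block-diagonal commutator identity, the same appeal to Lemma~\ref{[gg]=e}, and the same deduction of normality from the commutator-subgroup characterization. One small imprecision worth fixing: you take $g,h\in\GL_n(A)(\alpha_1,\cdots,\alpha_n)$, but arbitrary elements of the stable group live only in some $\GL_{nm}(A)(\alpha_1,\cdots,\alpha_n)^m$; the paper first chooses such an $m$, though of course your computation goes through verbatim with $nm$ in place of $n$.
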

\begin{proof}
Suppose that   $g$ and $h$ are two elements in  $\GL(A)(\alpha_1,\ldots,\alpha_n)$. Then there exists an $m\in \mathbb Z$ such that
$g,h\in \GL_{nm}(A)(\alpha_1,\cdots,\alpha_n)^m$.
Consider the images   of $g$ and $h$ in $\GL_{2nm}(A)(\alpha_1,\cdots,\alpha_n)^{2m}$ , which will be denoted by $g'$ and $h'$, respectively. 
The matrices  $g'$ and $h'$  take the following forms
$$
g'=\left(\begin{array}{cc}
g&0\\
0&I_{nm}
\end{array}\right) \quad\text{and} \quad h'=\left(\begin{array}{cc}
h&0\\
0&I_{nm}
\end{array}\right).
$$
Now apply the identity
$$
\left(\begin{array}{cc}ghg^{-1}h^{-1}&0\\0&1\end{array}\right)=\left(\begin{array}{cc}gh&0\\0&h^{-1}g^{-1}\end{array}\right)
\left(\begin{array}{cc}g^{-1}&0\\0&g\end{array}\right)\left(\begin{array}{cc}h^{-1}&0\\0&h\end{array}\right).
$$
By Lemma~\ref{[gg]=e}, all of the factors on the right lie in $\E_{2nm}(A)(\alpha_1,\ldots,\alpha_n)^{2m}$. This shows that 
$$[g,h]\in \E(A)(\alpha_1,\cdots,\alpha_n).$$
Hence
$$[\GL(A)(\alpha_1,\cdots,\alpha_n),\GL(A)(\alpha_1,\cdots,\alpha_n)]= \E(A)(\alpha_1,\cdots,\alpha_n).$$
It follows that $\E(A)(\alpha_1,\cdots,\alpha_n)$ is a normal subgroup of $\GL(A)(\alpha_1,\cdots,\alpha_n)$. This finishes the proof.
\end{proof}
Theorem~\ref{White} amounts to saying that the quotient group  $\GL(A)(\alpha_1,\cdots,\alpha_n)/\E(A)(\alpha_1,\cdots,\alpha_n)$ is an abelian group for every finite family  $\{\alpha_1,\cdots,\alpha_n\}$. 
\begin{Lem}\label{lem:4.7}
Let $S$ be a finite family of elements of $\Gamma$. Then 
$$
K_1(A)(S)\simeq \GL(A)(S)/\E(A)(S).
$$
\end{Lem}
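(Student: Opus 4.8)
The plan is to exhibit a pair of mutually inverse group homomorphisms between $K_1(A)(S)$ and $\GL(A)(S)/\E(A)(S)$, where I write $S=\{\alpha_1,\cdots,\alpha_n\}$ and abbreviate $F_m:=A(\alpha_1,\cdots,\alpha_n)^m$. First I would construct $\Phi\colon \GL(A)(S)/\E(A)(S)\to K_1(A)(S)$. Every $g\in\GL_{nm}(A)(\alpha_1,\cdots,\alpha_n)^m$ is a graded automorphism of $F_m$, so it determines a generator $[(F_m,g)]$ of $K_1(A)(S)$. By relation (K1) of Definition~\ref{def:K1}, the assignment $g\mapsto[(F_m,g)]$ is a homomorphism $\GL_{nm}(A)(\alpha_1,\cdots,\alpha_n)^m\to K_1(A)(S)$; applying (K2) to the split exact sequence $0\to F_m\to F_{m+1}\to F_1\to 0$ together with Proposition~\ref{prop:K1}(1) shows that these homomorphisms are compatible with the stabilizing maps $\varphi_m$, and hence assemble into a single homomorphism $\GL(A)(S)\to K_1(A)(S)$. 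Since $K_1(A)(S)$ is abelian, this map factors through the abelianization of $\GL(A)(S)$, which by the Graded Whitehead Lemma (Theorem~\ref{White}) is exactly $\GL(A)(S)/\E(A)(S)$. This yields $\Phi$.

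Next I would construct the candidate inverse $\Psi\colon K_1(A)(S)\to \GL(A)(S)/\E(A)(S)$. For a generator $[(P,\sigma)]$, the definition of $\mM A(S)$ lets me choose a complement $Q$ with $P\oplus Q\cong F_m$ for some $m$; then $\sigma\oplus\id_Q$ is an automorphism of $F_m$, i.e. an element of $\GL_{nm}(A)(\alpha_1,\cdots,\alpha_n)^m$, and I set $\Psi([(P,\sigma)])=[\sigma\oplus\id_Q]$. The crux is well-definedness. Independence of the chosen isomorphism $P\oplus Q\cong F_m$ is immediate, since two choices differ by conjugation by an element of $\GL(F_m)$, which is trivial modulo $\E(A)(S)$ because the quotient is abelian. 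Independence of the complement is handled by comparing two complements $Q,Q'$ inside $F_{m+m'}\cong F_m\oplus F_{m'}$: there is an automorphism of $F_{m+m'}$ interchanging the two copies of $P$ and fixing the complements, and conjugation by it carries the stabilization of $\sigma\oplus\id_Q$ to that of $\sigma\oplus\id_{Q'}$; again conjugation is trivial in the abelian quotient.

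It then remains to check that $\Psi$ respects the defining relations, and I expect relation (K2) to be the main obstacle. Relation (K1) is immediate from the block identity $(\sigma\oplus\id_Q)(\tau\oplus\id_Q)=(\sigma\tau)\oplus\id_Q$. For (K2), given a commutative diagram as in Definition~\ref{def:K1} with rows $0\to P_1\to P\to P_2\to 0$, the projectivity of $P_2$ (Proposition~\ref{gr-projective}) splits the row, so that $P\cong P_1\oplus P_2$ and, after choosing complements $P_i\oplus Q_i\cong F_{m_i}$, the automorphism $\sigma$ is represented by a block upper-triangular matrix $\left(\begin{smallmatrix}\sigma_1 & \mu\\ 0 & \sigma_2\end{smallmatrix}\right)$. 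Factoring out its block-diagonal part leaves a block unitriangular matrix, and the essential point is that this unitriangular factor lies in $\E(A)(S)$: here one uses that $\mu$ is a graded homomorphism of degree $0$, so that its matrix entries land precisely in the homogeneous components $A_{\alpha_i-\alpha_j}$ admissible for the elementary generators $e_{i,j}(r)$. Consequently $[\sigma\oplus\id_{Q_1\oplus Q_2}]=[\sigma_1\oplus\id_{Q_1}]+[\sigma_2\oplus\id_{Q_2}]$ modulo $\E(A)(S)$, which is exactly relation (K2) for $\Psi$. Thus $\Psi$ is a well-defined homomorphism.

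Finally I would verify that $\Phi$ and $\Psi$ are mutually inverse by two direct computations. For $\Phi\circ\Psi$, applying (K2) to $0\to P\to F_m\to Q\to 0$ with the automorphisms $\sigma$, $\sigma\oplus\id_Q$, $\id_Q$, together with Proposition~\ref{prop:K1}(1), gives $\Phi([\sigma\oplus\id_Q])=[(F_m,\sigma\oplus\id_Q)]=[(P,\sigma)]$, so $\Phi\circ\Psi=\id$. For $\Psi\circ\Phi$, the complement of the free module $F_m$ may be taken trivial, so $\Psi([(F_m,g)])=[g]$ and $\Psi\circ\Phi=\id$. Hence $\Phi$ and $\Psi$ are inverse group isomorphisms, which proves the lemma.
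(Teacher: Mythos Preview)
Your proposal is correct and is precisely the argument the paper has in mind: the paper does not actually prove Lemma~\ref{lem:4.7} but refers the reader to the classical proofs in \cite[Theorem~5.4]{Lam} and \cite[Theorem~3.1.7]{Rosen}, and your outline is the standard Bass--Rosenberg argument transported to the graded setting. The one place where the graded hypothesis enters nontrivially---that the unitriangular factor arising from the off-diagonal block $\mu$ in the (K2) verification has entries in the correct homogeneous components $A_{\alpha_i-\alpha_j}$ and hence lies in $\E(A)(S)$---you have identified explicitly, so there is nothing to add.
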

The lemma is an easy generalization  of the corresponding result in classical $K$-theory. We skip its proof and refer the interested reader to \cite[Theorem~5.4]{Lam} or \cite[Theorem~3.1.7]{Rosen}.

It follows by Lemma~\ref{lem:4.7} that for any $\Gamma$ graded ring $A$,  $K_1^{gr}(A)$ is a direct limit of matrix  rings, i.e.,
$$
K_1^{gr}(A)\simeq \varinjlim_{S}K_1(A)(S)\simeq\varinjlim_{S} \GL(A)(S)/\E(A)(S) .
$$
This gives  ``locally'' a matrix description for $K_1^{gr}(A)$.


\section{Relative $K_1^{gr}$ and an exact sequence}\label{s:5}
An important property of $K$-theory groups is the existence of  exact sequences relating the $K_i(A)$, $K_i(A/I)$ and the relative $K$-group $K_i(A,I)$ for each $i\in \mathbb Z$, where $A$ is an ungraded ring containing $I$ as a two-sided ideal. In the present section, we establish such exact sequence for $K_1^{gr}$ of graded rings.

\begin{Def}
Let $A$ be a $\Gamma$ graded ring and $I$ a graded two-sided ideal of $A$. The {\em double of $A$ along $I$} is the subring of the Cartesian product $A\times A$ given by
$$
D(A,I)=\{(x,y)\in A\times A\mid x-y\in I\}.
$$
Its grading is induced from the grading of $A\times A$.
\end{Def}
We denote the projection onto the $i$th coordinate of $D(A,I)$ by $p_i$. Then there is a  split exact sequence of graded rings
$$
\xymatrix@1{0\ar[r]& I\ar[r]& D(A,I)\ar[r]^{\hspace{15pt} p_1}& A\ar[r]&0.}
$$

\begin{Def}
The {\em relative $K_{1}^{gr}$ group}  of a graded ring $A$ and a graded ideal $I$ is defined by
$$
K_1^{gr}(A,I)=\ker \Big( (p_1)_*: K_1^{gr}(D(A,I))\to K_1^{gr}(A)\Big).
$$
\end{Def}
As in the case of ungraded rings, we need a more computable construction of   $K_{1}^{gr}(A,I)$. This will be done locally.
\begin{Def}
Let $S$ be a finite family of elements  of $\Gamma$. We define
$$
K_1(A,I)(S)=\ker \Big( (p_1)_*: K_1(D(A,I))(S)\to K_1(A)(S)\Big).
$$
\end{Def}

\begin{The}\label{relative-White}
Let $A$ be a $\Gamma$ graded ring with graded two-sided ideal $I$. Then $\E(A,I)(\alpha_1,\cdots,\alpha_n)$ is normal in $\GL(A,I)(\alpha_1,\cdots,\alpha_n)$, and
$$
\GL(A,I)(\alpha_1,\cdots,\alpha_n)/\E(A,I)(\alpha_1,\cdots,\alpha_n)\cong K_1(A,I)(\alpha_1,\cdots,\alpha_n),
$$
for every finite family $\{\alpha_1,\cdots,\alpha_n\}$ of elements of $\Gamma$.
\end{The}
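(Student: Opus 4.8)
The plan is to deduce the relative statement from the absolute Graded Whitehead Lemma (Theorem~\ref{White}) applied to the double $D(A,I)$, exploiting the split exact sequence of graded rings $0\to I\to D(A,I)\to A\to 0$ with projection $p_1$ and diagonal section $\Delta\colon A\to D(A,I)$, $a\mapsto(a,a)$. Throughout I abbreviate the family $\{\alpha_1,\cdots,\alpha_n\}$ and suppress it from the notation. Since $p_1$, $p_2$ and $\Delta$ are graded ring homomorphisms, they induce group homomorphisms on all the parametrized stable linear groups, carrying each elementary generator of a given degree to an elementary generator of the same degree; in particular $\Delta$ carries $\E(A)$ into $\E(D(A,I))$. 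By Lemma~\ref{lem:4.7} applied to the ring $D(A,I)$ one has $K_1(D(A,I))\cong \GL(D(A,I))/\E(D(A,I))$.

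First I would record a purely group-theoretic description inside $\GL(D(A,I))$. An element there is a pair $(g,h)$ with $g,h\in\GL(A)$ agreeing entrywise modulo $I$, so $\ker\big(p_1\colon\GL(D(A,I))\to\GL(A)\big)=\{(1,h)\mid h\in\GL(A,I)\}$, which I identify with the congruence subgroup $\GL(A,I)$. I then show that the image $H$ of this kernel in $K_1(D(A,I))$ equals $K_1(A,I)=\ker(p_1)_*$. The inclusion $H\subseteq\ker(p_1)_*$ is clear; conversely, if $[(g,h)]\in\ker(p_1)_*$ then $g\in\E(A)$, whence $\Delta(g)=(g,g)\in\E(D(A,I))$ is trivial in $K_1$ and $[(g,h)]=[(g,h)(g,g)^{-1}]=[(1,hg^{-1})]$ with $hg^{-1}\in\GL(A,I)$. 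Using that $\E(D(A,I))$ is normal by Theorem~\ref{White}, the second isomorphism theorem gives $K_1(A,I)=H\cong\GL(A,I)/\big(\GL(A,I)\cap\E(D(A,I))\big)$, the intersection being taken inside $\ker p_1$.

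Everything then reduces to the identity
$$\big(\{1\}\times\GL(A,I)\big)\cap\E(D(A,I))=\{1\}\times\E(A,I).$$
The inclusion $\supseteq$ holds because $(1,e_{ij}(a))=e_{ij}((0,a))\in\E(D(A,I))$ for $a\in I$, so $\E(I)$ embeds, while conjugation by $\Delta(\E(A))$ preserves $\E(D(A,I))$ and realizes $\E(A)$-conjugation on the second coordinate, so the whole normal closure $\E(A,I)$ embeds. The reverse inclusion is the main obstacle. Given $(1,h)\in\E(D(A,I))$, write it as a product $\prod_k e_{i_kj_k}((r_k,s_k))$ of elementary generators over $D(A,I)$; applying $p_1$ yields $\prod_k g_k=1$ with $g_k=e_{i_kj_k}(r_k)$, and applying $p_2$, with $s_k=r_k+t_k$ and $t_k\in I$, yields $h=g_1\epsilon_1g_2\epsilon_2\cdots g_N\epsilon_N$ where $\epsilon_k=e_{i_kj_k}(t_k)\in\E(I)$. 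The telescoping rewriting $h=(g_1\cdots g_N)\prod_k(c_k^{-1}\epsilon_k c_k)$ with $c_k=g_{k+1}\cdots g_N\in\E(A)$ collapses, because $g_1\cdots g_N=1$, to a product of $\E(A)$-conjugates of elements of $\E(I)$, so $h\in\E(A,I)$.

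Finally, normality of $\E(A,I)$ in $\GL(A,I)$ is immediate from the same identity: $\{1\}\times\E(A,I)$ is the intersection of $\ker p_1$ and $\E(D(A,I))$, both normal in $\GL(D(A,I))$ (the latter by Theorem~\ref{White}), hence normal in $\ker p_1\cong\GL(A,I)$. Combining this with the isomorphism $H\cong\GL(A,I)/\E(A,I)$ from the previous step yields the theorem. The only genuinely nontrivial point is the reverse inclusion above (the relative Whitehead computation); the remainder is bookkeeping, provided one checks that $D(A,I)$ carries the induced grading compatibly, so that $p_1$, $p_2$, $\Delta$ respect all the degree constraints on matrix entries.
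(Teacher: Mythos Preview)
Your proof is correct and follows essentially the same route as the paper's: both work in the double $D(A,I)$, represent every class in $K_1(A,I)(\alpha_1,\ldots,\alpha_n)$ by a pair $(1,g)$ with $g\in\GL(A,I)(\alpha_1,\ldots,\alpha_n)$, and establish the key identity $(\{1\}\times\GL(A,I))\cap\E(D(A,I))=\{1\}\times\E(A,I)$ via the same telescoping rewriting of a product of elementary generators. The one minor difference is that you deduce normality of $\E(A,I)$ from this intersection together with Theorem~\ref{White} applied to $D(A,I)$, whereas the paper proves normality first by the explicit block-matrix trick using Lemma~\ref{[gg]=e}; your route is slightly slicker but not substantively different.
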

\begin{proof}
Suppose
$e\in \E(A,I)(\alpha_1,\cdots, \alpha_n)$ and $g\in \GL(A)(\alpha_1,\cdots, \alpha_n)$. Then there exists an $m$, such that $e\in \E_{nm}(A)(\alpha_1,\cdots, \alpha_n)^m$ and  $g\in \GL_{nm}(A)(\alpha_1,\cdots, \alpha_n)^m$.  
Then we have the identity
$$
\left(\begin{array}{cc}
geg^{-1}&0\\0&I_{nm}
\end{array}\right)=\left(\begin{array}{cc}
g&0\\0&g^{-1}
\end{array}\right)
\left(\begin{array}{cc}
e&0\\0&I_{nm}
\end{array}\right)
\left(\begin{array}{cc}
g^{-1}&0\\0&g
\end{array}\right).
$$
By Lemma~\ref{[gg]=e}, the first and third factors on the right are in $\E_{2nm}(A)(\alpha_1,\cdots, \alpha_n)^{2m}$. This proves the first assertion.

Suppose $(g_1,g_2)\in  \GL(D(A,I))(\alpha_1,\cdots,\alpha_n)$ and maps to the identity of $K_1^{gr}(A,I)$ under $(p_1)_*$.  This means that $g_1\in \E(A)(\alpha_1,\cdots,\alpha_n)$. 
Thus there is an $m\in\mathbb N$ such that 
$
g_1,g_2\in \GL_{nm}(A)(\alpha_1,\cdots, \alpha_n)^m.
$
Thus $g_1$ can be write as a finite product of elementary matrices, namely we have $g_1=\prod_k e_{i_k,j_k}(a_k)$. But
$$
(g_1,g_1)=\prod_k e_{i_k,j_k}(a_k,a_k)\in \E(D(A,I)(\alpha_1,\cdots, \alpha_n)^m).
$$
Multiplying $(g_1,g_2)$ by $(g_1,g_1)^{-1}$ does not change the class of $(g_1,g_2)$ in $K_1(D(A,I))(\alpha_1,\cdots,\alpha_n)$, and 
$$
(g_1,g_2)(g_1, g_1)^{-1}=(1, g_2g_1^{-1}).
$$
Since $(1, g_2g_1^{-1})\in \GL(D(A,I))(\alpha_1,\cdots,\alpha_n)$, we deduce that $g_2g_1^{-1}\in \GL(A,I)(\alpha_1,\cdots,\alpha_n)$. 

Conversely every $g\in \GL(A,I)(\alpha_1,\cdots,\alpha_n)$ corresponds to $(1,g)$ in $\GL(D(A,I))(\alpha_1,\cdots,\alpha_n)$, which defines an element in $K_1(D(A,I))(\alpha_1,\cdots,\alpha_n)$. 

To show that $\GL(A,I)(\alpha_1,\cdots,\alpha_n)/\E(A,I)(\alpha_1,\cdots,\alpha_n)\cong K_1(A,I)(\alpha_1,\cdots,\alpha_n)$, we need only to  check that $(1, g)\in \E(D(A,I))(\alpha_1,\cdots,\alpha_n)$ if and only if $ g\in \E(A,I)(\alpha_1,\cdots,\alpha_n)$ for any $g\in \GL(A,I)(\alpha_1,\cdots,\alpha_n)$.

For one direction, suppose that $g\in \E(A,I)(\alpha_1,\cdots,\alpha_n)$. Then there exists an $m\in\mathbb N$
such that $g\in \E_{nm}(A,I)(\alpha_1,\cdots,\alpha_n)^m$.  Then $g=\prod_k S_ke_{i_k,j_k}(a_k)S_k^{-1}$ where $S_k\in \E_{nm}(A)(\alpha_1,\alpha_2,\cdots,\alpha_n)^m$ and $a_k\in I$. It suffices to show that $(1, S_ke_{i_k,j_k}(a_k)S_k^{-1})\in \E_{nm}(D(A,I))(\alpha_1,\alpha_2,\cdots,\alpha_n)^m$ for each $k$. But
$$
(1, S_ke_{i_k,j_k}(a_k)S_k^{-1})=(S_k,S_k)e_{i,j}(0,a_k)(S_k^{-1},S_k^{-1})
$$
and all the three factors on the right lie in $\E(D(A,I))(\alpha_1,\alpha_2,\cdots,\alpha_n)$.

For the other direction, suppose that $(1, g)\in \E(D(A,I))(\alpha_1,\cdots,\alpha_n)$. There exists an $m\in\mathbb Z$ such that $(1,g)\in \E_{nm}(A,I)(\alpha_1,\cdots,\alpha_n)^m)$. Thus
$$
(1,g) =\prod_{k=1}^re_{i_k,j_k}(a_k,b_k)\in \E_{nm}(A,I)(\alpha_1,\cdots,\alpha_n)^m,
$$
where $\prod_{k=1}^re_{i_k,j_k}(a_k)=1$. Note for each $k$,
$$
e_{i_k,j_k}(a_k,b_k)=e_{i_k,j_k}(a_k,a_k)e_{i_k,j_k}(0,b_k-a_k)=(S_k,S_k)(1, T_{k}),
$$
where
$$
S_k=e_{i_k,j_k}(a_k), \quad T_k=e_{i_k,j_k}(b_k-a_k),  \quad b_k-a_k\in I.
$$
Thus we have
\begin{eqnarray*}
(1, g)&=&\prod_{k=1}^re_{i_k,j_k}(a_k,b_k)\\
&=&\prod_{k=1}^r (S_k, S_kT_k)\\
&=&(S_1,S_1T_1S_1^{-1})(S_2,S_1S_2T_1S_2^{-1}S_1^{-1})\\
&&\qquad \cdots (S_2,S_1S_2\cdots S_rT_1)\\
&=&\Big(I_n, (S_1T_1S_1^{-1})(S_1S_2T_1S_2^{-1}S_1^{-1})\\
&&\qquad\cdots(S_1S_2\cdots S_rT_1 S_r^{-1}\cdots S_2^{-1}S_1^{-1})
\Big)
\end{eqnarray*}
since $S_1S_2\cdots S_r=1$, and we have written $g$ as a product of generators of $E_n(A,I)(\alpha_1,\dots,\alpha_n)$. This finishes the proof.
\end{proof}

\begin{The}\label{Exact}
Let $A$ be a graded ring with a graded two-sided ideal $I$ and $S$ a finite family of elements of $\Gamma$. Then there is a natural exact sequence of groups
$$
\xymatrix@1{K_1(A,I)(S)\ar[r]^{\quad(p_2)_*}& K_1(A)(S)\ar[r]^{q_*}& K_1(A/I)(S)},
$$
where $(p_2)_*$ is induced by the projection $p_2:D(A,I)\to A$ and $q$ is induced by the quotient map $A\to A/I$.
\end{The}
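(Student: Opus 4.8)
The plan is to transport the whole sequence into the matrix descriptions furnished by Lemma~\ref{lem:4.7} and Theorem~\ref{relative-White}, and then to run the classical exactness argument; the only genuinely graded ingredient is that lifts of elementary matrices must be chosen homogeneously. Throughout write $S=\{\alpha_1,\dots,\alpha_n\}$.

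First I would record the identifications. Applying Lemma~\ref{lem:4.7} to the $\Gamma$ graded rings $A$ and $A/I$ gives
\[
K_1(A)(S)\cong \GL(A)(S)/\E(A)(S),\qquad K_1(A/I)(S)\cong \GL(A/I)(S)/\E(A/I)(S),
\]
and under these isomorphisms $q_*$ is induced by the reduction homomorphism $q\colon \GL(A)(S)\to \GL(A/I)(S)$, which carries elementary generators to elementary generators. By Theorem~\ref{relative-White}, $K_1(A,I)(S)\cong \GL(A,I)(S)/\E(A,I)(S)$, the isomorphism sending the class of $g\in \GL(A,I)(S)$ to the class of $(1,g)\in \GL(D(A,I))(S)$. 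Since $(p_2)_*$ sends $(1,g)$ to $p_2(1,g)=g$, it is precisely the map on quotients induced by the inclusion $\GL(A,I)(S)\hookrightarrow \GL(A)(S)$. Thus the sequence to be proved becomes
\[
\GL(A,I)(S)/\E(A,I)(S)\xrightarrow{(p_2)_*} \GL(A)(S)/\E(A)(S)\xrightarrow{q_*}\GL(A/I)(S)/\E(A/I)(S),
\]
and it remains to show that the image of the left-hand map equals the kernel of the right-hand one.

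The inclusion $\operatorname{im}((p_2)_*)\subseteq\ker(q_*)$ is immediate: if $g\in \GL(A,I)(S)$ then $q(g)=1$ by the definition of the congruence subgroup, so $q_*$ kills the class of $g$; equivalently $q_*\circ (p_2)_*=0$. For the reverse inclusion, suppose $[g]\in K_1(A)(S)$ satisfies $q_*([g])=0$, i.e.\ $q(g)\in \E(A/I)(S)$. Expressing $q(g)$ as a finite product of elementary generators $e_{i,j}(\bar r)$ with $\bar r\in (A/I)_{\alpha_i-\alpha_j}$, I would lift each $\bar r$ to a homogeneous element $r\in A_{\alpha_i-\alpha_j}$; this is possible precisely because the decomposition $A/I=\bigoplus_{\lambda}A_\lambda/I_\lambda$ makes $A_\lambda\to (A/I)_\lambda$ surjective in every degree. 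The resulting product $e$ of elementary matrices lies in $\E(A)(S)$ and satisfies $q(e)=q(g)$, whence $q(ge^{-1})=1$ and $ge^{-1}\in \GL(A,I)(S)$, while $[ge^{-1}]=[g]$ in $K_1(A)(S)$ because $e\in \E(A)(S)$. Therefore $[g]$ lies in the image of $(p_2)_*$, and the two subgroups coincide.

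The step that distinguishes this from the ungraded situation, and the one I expect to need the most care, is this homogeneous lifting of elementary matrices; once degreewise surjectivity of $A\to A/I$ is in hand the lift is automatic, and the remainder is the standard Bass computation. The only bookkeeping is to pass to a common finite stabilization level $m$ so that $g$, $q(g)$, and the chosen lift $e$ all lie in the appropriate $\GL_{nm}(A)(\alpha_1,\dots,\alpha_n)^m$, which is harmless because each of $\GL$, $\E$, and the congruence subgroups is by construction a directed union over $m$.
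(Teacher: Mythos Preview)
Your proof is correct and follows essentially the same line as the paper: both reduce to the matrix description via Lemma~\ref{lem:4.7} and Theorem~\ref{relative-White}, note that $q_*\circ(p_2)_*=0$ trivially, and for the reverse inclusion lift $q(g)\in\E(A/I)(S)$ to some $e\in\E(A)(S)$ so that $ge^{-1}\in\GL(A,I)(S)$ represents the same class. Your version is simply more explicit about the degreewise surjectivity of $A\to A/I$ underlying the lift of elementary generators, which the paper compresses into the single clause ``$\E(A)(S)$ maps surjectively onto $\E(A/I)(S)$''.
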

\begin{proof}
We have seen that any class in $K_1(A,I)(S)$ is represented by 
$$(1,g)\in \GL(D(A,I))(S)$$
with $g\in \GL(A,I)(S)$. We obtain immediately that $q_*\circ (p_2)_*([g])=1$ in $K_1(A/I)(S)$.

Conversely, if $g\in K_1(A)(S)$ and $q_*(g)=1$ in $K_1(A/I)(S)$, then  $g$ is represented by 
$g'\in \GL(A)(S)$.
$q(g')\in \E(A/I)(S)$. Since $\E(A)(S)$ maps surjectively  onto $\E(A/I)(S)$, we may find $h\in \E(A)$ such that $q(g')=q(h)$. Therefore $q(gh^{-1})=1\in \GL(A/I)(S)$. Now $(1, gh^{-1})\in \GL(D(A,I))(S)$ and we  finish the proof by Theorem~\ref{relative-White}. \end{proof}

Since taking direct limits preserve exactness, we obtain from  Theorem~\ref{Exact} the usual $K$-theory sequence for  graded $K_1^{gr}$:
\begin{The}\label{Exact2}
Let $A$ be a graded ring with a graded two-sided ideal $I$. Then there is a natural exact sequence of $\mathbb Z[\Gamma]$-modules
$$
\xymatrix@1{K_1^{gr}(A,I)\ar[r]^{\quad(p_2)_*}& K_1^{gr}(A)\ar[r]^{q_*}& K_1^{gr}(A/I)},
$$
where $(p_2)_*$ is induced by the projection $p_2:D(A,I)\to A$ and $q$ is induced by the quotient map $A\to A/I$.
\end{The}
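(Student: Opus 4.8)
The plan is to obtain the global sequence by passing to the direct limit over all finite families $S$ in the local exact sequence of Theorem~\ref{Exact}, and then to verify that the resulting maps respect the $\mathbb Z[\Gamma]$-module structure. The single fact from homological algebra that drives the whole argument is that the filtered colimit functor on abelian groups (equivalently on $\mathbb Z[\Gamma]$-modules) is exact, so it preserves both kernels and exact sequences.

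First I would identify each of the three terms with a direct limit of its local counterparts. For the two outer terms this is exactly Theorem~\ref{the:1}, giving
$$K_1^{gr}(A)\cong\varinjlim_S K_1(A)(S)\qquad\text{and}\qquad K_1^{gr}(A/I)\cong\varinjlim_S K_1(A/I)(S).$$
For the relative term I would apply the same theorem to the double $D(A,I)$, yielding $K_1^{gr}(D(A,I))\cong\varinjlim_S K_1(D(A,I))(S)$, and then use exactness of filtered colimits to commute $\varinjlim_S$ past the kernel defining the relative group:
$$K_1^{gr}(A,I)\cong\varinjlim_S\ker\big((p_1)_*\colon K_1(D(A,I))(S)\to K_1(A)(S)\big)=\varinjlim_S K_1(A,I)(S).$$
Here one must check that $(p_1)_*$ commutes with the transition homomorphisms, so that it is a morphism of directed systems; this is automatic because $p_1$ is a graded ring homomorphism and the transition maps are induced by inclusions of full subcategories.

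Next I would check that the local maps $(p_2)_*$ and $q_*$ of Theorem~\ref{Exact} assemble into morphisms of directed systems. Since $p_2\colon D(A,I)\to A$ and $q\colon A\to A/I$ are graded ring homomorphisms, the induced maps are compatible with the inclusion-induced transition homomorphisms $K_1(A)(S)\to K_1(A)(T)$ for $S\subseteq T$; hence their colimits are well-defined maps $(p_2)_*$ and $q_*$ between the limit groups. Applying exactness of the colimit to the family of local exact sequences from Theorem~\ref{Exact} then yields exactness of
$$K_1^{gr}(A,I)\xrightarrow{(p_2)_*} K_1^{gr}(A)\xrightarrow{q_*} K_1^{gr}(A/I).$$
For the $\mathbb Z[\Gamma]$-module assertion, the $\Gamma$-action on each limit is induced by the suspensions $\mS_\lambda$, which shift a family $S$ to $S+\lambda$ and thereby act on $\varinjlim_S$. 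Because $p_2$ and $q$ are degree-preserving, they commute with every $\mS_\lambda$, so $(p_2)_*$ and $q_*$ are $\Gamma$-equivariant; combined with Lemma~\ref{lem:sus} this upgrades the sequence to one of $\mathbb Z[\Gamma]$-modules.

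The only genuinely delicate point is the interchange of $\varinjlim$ with the kernel in the relative term: one must confirm that the directed system of relative groups $K_1(A,I)(S)$ is indeed the one computing $K_1^{gr}(A,I)$, rather than merely mapping to it. This is precisely where exactness of filtered colimits does the work, so I expect no serious obstacle once the directed-system compatibilities for $(p_1)_*$, $(p_2)_*$ and $q_*$ established above are in place.
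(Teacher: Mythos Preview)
Your proposal is correct and follows exactly the same route as the paper: the paper's entire argument is the single sentence ``Since taking direct limits preserve exactness, we obtain from Theorem~\ref{Exact} the usual $K$-theory sequence for graded $K_1^{gr}$,'' and you have simply unpacked this, including the identification $K_1^{gr}(A,I)\cong\varinjlim_S K_1(A,I)(S)$ and the $\Gamma$-equivariance, which the paper leaves implicit.
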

\begin{Rem}
As in classical $K$-theory, the sequence above can be extended to graded $K_0$ groups
$$
\xymatrix@1{K_1^{gr}(A,I)\ar[r]^{\quad(p_2)_*}& K_1^{gr}(A)\ar[r]^{q_*}& K_1^{gr}(A/I)\ar[r]&K_0^{gr}(A,I)\ar[r]& K_0^{gr}(A)\ar[r]& K_0^{gr}(A/I)}.
$$
 It is a straightforward adoption of the proof for the classical result, see \cite{Bass} or \cite{Rosen} and see \cite{RH2} for the exactness of  the extended sequence in classical $K$-theory.
\end{Rem} 

\section{$K_1^{gr}$ of cross products}\label{s:final}
It is known that for any $\Gamma$ strongly graded  ring $A$, the category $\mM A$ of graded $A$-modules  is equivalent to the category $\mM {A_0}$ of $A_0$-modules  \cite[Theorem~2.8]{Dade}. Therefore $K_i^{gr}(A)$ is isomorphic to $K_i(A_0)$ as a group \cite[Section~3.8]{RH2}.  In this section, we will give an elementary proof of the fact  $K_1^{gr}(A)\cong K_1(A_0)$ as groups when $A$ is strongly graded, and then show that the action of $\Gamma$ on $K_1^{gr}(A)$ is always trivial, when $A$ is a cross product.

\begin{Lem}
Let $A$ be a $\Gamma$ strongly graded ring. For any free graded module $A(\alpha)$ of rank $1$, there exist a finitely generated graded projective module $P$ and a free graded module $A(0)^m$ of rank $m$ such that 
$$
A(\alpha)\oplus P \cong _{gr} A(0)^m.
$$
\end{Lem}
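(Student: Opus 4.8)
The plan is to realize $A(\alpha)$ as a direct summand of the graded free module $A(0)^m$ by writing down an explicit splitting, where the rank $m$ is dictated by the number of terms needed to express $1$ through the strong grading. First I would invoke the hypothesis that $A$ is strongly graded in the form $A_{-\alpha}A_\alpha = A_0$. Since $1\in A_0$, this yields a decomposition
$$1=\sum_{i=1}^{m} v_i u_i,\qquad v_i\in A_{-\alpha},\ u_i\in A_\alpha,$$
and this $m$ will be exactly the rank appearing in the statement.

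Next I would define two candidate morphisms in $\mM A$, both left $A$-module homomorphisms:
$$f\colon A(\alpha)\to A(0)^m,\quad f(b)=(bv_1,\dots,bv_m),\qquad g\colon A(0)^m\to A(\alpha),\quad g(a_1,\dots,a_m)=\sum_{i=1}^m a_i u_i.$$
The only point needing care here is the degree bookkeeping: for $b\in A(\alpha)_\lambda=A_{\alpha+\lambda}$ one has $bv_i\in A_{\alpha+\lambda}A_{-\alpha}\subseteq A_\lambda=A(0)_\lambda$, so $f$ is a graded homomorphism of degree $0$, and symmetrically $g$ preserves degrees. Thus $f$ and $g$ are genuine morphisms in $\mM A$.

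Then I would compute the composite and extract the summand. A direct calculation gives $g\bigl(f(b)\bigr)=\sum_i (bv_i)u_i=b\sum_i v_i u_i=b$, so $g\circ f=\id_{A(\alpha)}$. Hence $e:=f\circ g$ is an idempotent endomorphism of $A(0)^m$, with $\im e=f(A(\alpha))$ and $\ker e=\ker g$, giving $A(0)^m=f(A(\alpha))\oplus\ker g$. Since $g$ splits $f$, the map $f$ is injective and $f(A(\alpha))\cong_{gr}A(\alpha)$. Setting $P=\ker g$, which is a direct summand of a graded free module and therefore a finitely generated graded projective $A$-module by Proposition~\ref{gr-projective}, I obtain $A(\alpha)\oplus P\cong_{gr}A(0)^m$, as required.

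The main obstacle — indeed the only place the argument is delicate — is the \emph{order} of the product in the expression for $1$. One must use $A_{-\alpha}A_\alpha=A_0$ (so that the composite picks up $\sum_i v_i u_i$) rather than $A_\alpha A_{-\alpha}=A_0$; with the wrong order the composite would collapse to $\sum_i u_i v_i$, which is merely some idempotent of $A_0$ and need not equal $1$, so that $g\circ f$ would fail to be the identity and no splitting would result.
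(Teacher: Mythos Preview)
Your proof is correct and follows essentially the same route as the paper: both start from a decomposition $1=\sum s_i t_i$ with $s_i\in A_{-\alpha}$, $t_i\in A_\alpha$ coming from the strong grading, and both use the resulting graded surjection $A(0)^m\to A(\alpha)$, $a_i\mapsto t_i$. The only difference is cosmetic: the paper appeals to Proposition~\ref{gr-projective} to split this surjection abstractly, whereas you write down the section $f(b)=(bs_1,\dots,bs_m)$ by hand and verify $g\circ f=\id$ directly; your remark about the order in the product is precisely what makes this explicit section work.
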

\begin{proof}
By definition, $A(\alpha)$ is an $\alpha$-suspended module of $A$. Since $A(\alpha)$ is $\alpha$-suspended,  the identity $1$ of $A$ is of level $-\alpha$ in $A(\alpha)$. To make things clear, we denote the $\alpha$-suspended identity $1$ by $1(\alpha)$. 

Since $A$ is strongly graded, we can find elements $s_1,\ldots, s_m\in \A_{-\alpha}$ and $t_1, \ldots, t_m\in A_{\alpha}$ such that 
$$
\sum_{i=1}^m s_it_i=1.
$$
Since all $t_i$'s are  in $A_{\alpha}$, it follows immediately that the $t_i1(\alpha)$ 's are in $A(\alpha)_0$. Furthermore, the equation
$$
\sum_{i=1}^m s_it_i1(\alpha)=1(\alpha)
$$
implies that $t_11(\alpha),\ldots,t_m1(\alpha)$ generated $A(\alpha)$ as graded module. 

The free graded module $A(0)^m$ has a basis $(a_1, \ldots,a_m)$ with $a_1,\ldots,a_m\in A(0)_0$. For each $i$, assign  $a_i$ in $A(0)^m$ to $ t_i1(\alpha)$ in $A(\alpha)$. Extending the assignment linearly, we get a graded homomorphism $\varphi: A(0)^m\to A(\alpha)$, which is  surjective. The graded homomorphism $\varphi$ induces
 an exact sequence of graded modules
$$
\xymatrix@1{0\ar[r]& \ker(\varphi)\ar[r]& A(0)^m\ar[r]^{\varphi}&A(\alpha)\ar[r]&0 },
$$
which is split by Proposition~\ref{gr-projective}. We obtain that
$$A(\alpha)\oplus \ker(\varphi) \cong _{gr} A(0)^m,
$$
and   the proof is finished.
\end{proof}
A direct consequence of the above lemma is the following.

\begin{Lem}
Let $A$ be a strongly graded $\Gamma$ graded ring. Then
every free graded $A$-module of finite rank is a direct summand of a free graded module $A(0,\ldots,0)$ of finite rank. Furthermore, every finitely generated graded projective $A$-module is a direct summand of a free graded module $A(0,\ldots,0)$ of finite rank.
\end{Lem}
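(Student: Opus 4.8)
The plan is to reduce both assertions to the preceding lemma by handling one shift at a time and then assembling the pieces via direct sums. For the first assertion, I would write an arbitrary finite-rank free graded module in the form $A(\alpha_1,\ldots,\alpha_n)=\bigoplus_{i=1}^n A(\alpha_i)$. The preceding lemma supplies, for each $i$, a finitely generated graded projective module $P_i$ and an integer $m_i$ with $A(\alpha_i)\oplus P_i\cong_{gr}A(0)^{m_i}$. Forming the direct sum over $i$ and regrouping the summands gives
$$
A(\alpha_1,\ldots,\alpha_n)\oplus\Big(\bigoplus_{i=1}^n P_i\Big)\cong_{gr}\bigoplus_{i=1}^n A(0)^{m_i}=A(\underbrace{0,\ldots,0}_{m_1+\cdots+m_n}),
$$
which exhibits $A(\alpha_1,\ldots,\alpha_n)$ as a direct summand of a finite-rank free module concentrated in degree $0$.

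For the second assertion, I would invoke condition~(1) of Proposition~\ref{gr-projective}: a finitely generated graded projective module $Q$ is a direct summand of a finite-rank graded free module $F$, say $Q\oplus Q'\cong_{gr}F$ with $F=A(\beta_1,\ldots,\beta_k)$. The first assertion, just established, then shows that $F$ itself is a direct summand of some $A(0,\ldots,0)$, say $F\oplus F'\cong_{gr}A(0,\ldots,0)$. Combining the two splittings yields
$$
Q\oplus(Q'\oplus F')\cong_{gr}F\oplus F'\cong_{gr}A(0,\ldots,0),
$$
so $Q$ is a direct summand of a finite-rank free module concentrated in degree $0$, as required.

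There is essentially no deep obstacle here: the content is entirely carried by the preceding lemma, and the present statement is bookkeeping built on the transitivity of the relation \emph{``is a direct summand of.''} The only points requiring care are to keep every rank finite---guaranteed because the family $\{\alpha_i\}$ is finite and each $m_i$ is finite---and to confirm that the auxiliary module $Q'$ coming from Proposition~\ref{gr-projective} may be chosen so that the ambient free module $F$ has finite rank, which holds precisely because $Q$ is finitely generated.
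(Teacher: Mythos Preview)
Your proposal is correct and is exactly the argument the paper has in mind: the paper does not give a proof at all, merely stating that the lemma is ``a direct consequence of the above lemma,'' and your write-up is precisely the routine bookkeeping that fills this in.
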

 This phenomenon reduces $K_1^{gr}(A)$ to $K_1(A)(0)$ which is $\GL(A)(0)/\E(A)(0)$ by Lemma~\ref{lem:4.7}. By the definition of the $K_1$ of ungraded rings, $\GL(A)(0)/\E(A)(0) \cong K_1(A_0)$. Hence we have
\begin{Lem}
Let $A$ be a strongly graded $\Gamma$ graded ring. Then
$$
K_1^{gr}(A)\cong K_1(A_0)
$$
as groups.
\end{Lem}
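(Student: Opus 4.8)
The plan is to identify $K_1^{gr}(A)$ with a single ``local'' piece and then recognize that piece, after passing to matrices, as the classical $K_1$ of the base ring $A_0$. Concretely, I would first use the preceding lemma to collapse the direct limit of Theorem~\ref{the:1} down to the family $S=(0)$, then invoke Lemma~\ref{lem:4.7} to pass to $\GL(A)(0)/\E(A)(0)$, and finally observe that with all shift parameters equal to $0$ the graded linear and elementary groups are literally the ordinary groups $\GL(A_0)$ and $\E(A_0)$.

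First I would show $K_1(A)(0)\cong K_1^{gr}(A)$. The preceding lemma asserts that, for a strongly graded ring, every finitely generated graded projective $A$-module is a direct summand of some free module $A(0,\ldots,0)$. Hence the full subcategory $\mM A(0)$, whose objects are the summands of the modules $A(0)^m$, contains every object of $\mM A$ up to graded isomorphism; being full it also has all the morphisms, so $\mM A(0)$ and $\mM A$ share a common small skeleton and $K_1(A)(0)=K_1(\mM A(0))=K_1(\mM A)=K_1^{gr}(A)$. This is consistent with Theorem~\ref{the:1}: the equalities $\mM A(S)=\mM A$ force every term of the direct system to be canonically isomorphic, so the limit is already attained at $S=(0)$.

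Next I would apply Lemma~\ref{lem:4.7} with $S=(0)$ to obtain $K_1(A)(0)\cong \GL(A)(0)/\E(A)(0)$, and then identify the right-hand side with $K_1(A_0)$. The matrix representation of $\End_{gr}$ recorded in Section~\ref{s:1} specializes, when $\alpha_1=\cdots=\alpha_n=0$, to $M_n(A)(0,\ldots,0)_0\cong M_n(A_0)$, since every entry lies in $A_{\alpha_i-\alpha_j}=A_0$. A degree-zero endomorphism of $A(0,\ldots,0)$ is graded invertible exactly when the corresponding matrix is invertible over $A_0$, so $\GL_n(A)(0,\ldots,0)=\GL_n(A_0)$; likewise the generators $e_{ij}(r)$ with $r\in A_{\alpha_i-\alpha_j}=A_0$ are precisely the ordinary elementary matrices, so $\E_n(A)(0,\ldots,0)=\E_n(A_0)$. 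Taking the direct limits over $m$ that define the stable groups yields $\GL(A)(0)=\GL(A_0)$ and $\E(A)(0)=\E(A_0)$, whence the classical matrix description of $K_1$ gives $\GL(A)(0)/\E(A)(0)=\GL(A_0)/\E(A_0)\cong K_1(A_0)$. Combining the two isomorphisms finishes the proof.

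The genuine content of the lemma is carried by the preceding structural result $\mM A(0)=\mM A$, which is already in hand; the remaining steps are pure identifications. Accordingly, the only place demanding care, and the ``hard part'' in the narrow sense, is the verification that graded invertibility of a degree-zero endomorphism of $A(0,\ldots,0)$ coincides with ordinary invertibility over $A_0$, and that the shift-zero elementary generators are exactly the ungraded ones. Both are immediate from the explicit matrix description of $\End_{gr}(A(\alpha_1,\ldots,\alpha_n))_0$ in Section~\ref{s:1}, so I anticipate no real obstacle.
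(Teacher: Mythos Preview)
Your proposal is correct and follows essentially the same route as the paper: use the preceding lemma to collapse $K_1^{gr}(A)$ to the single piece $K_1(A)(0)$, invoke Lemma~\ref{lem:4.7} to pass to $\GL(A)(0)/\E(A)(0)$, and then identify this quotient with $\GL(A_0)/\E(A_0)\cong K_1(A_0)$ via the observation that $M_n(A)(0,\ldots,0)_0=M_n(A_0)$. The paper's argument is stated very tersely in the paragraph preceding the lemma; your version simply spells out the same identifications in more detail.
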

In  case  $A$ is a cross product, the action of $\Gamma$ is always trivial.
 
\begin{Lem}\label{lem:7.1}
Let $\{\alpha_1,\cdots,\alpha_n\}$ be a finite family of elements of  $\Gamma$ and $\lambda$ an element in $\Gamma$. For any invertible homogeneous element $r\in A_\lambda$, we have 
$$
\left(\begin{array}{cc}
0&-r^{-1}I_n\\rI_n&0
\end{array}\right)\in \E_{2n}(A)(\alpha_1,\cdots,\alpha_n,\lambda+\alpha_1,\cdots,\lambda+\alpha_n).
$$
\end{Lem}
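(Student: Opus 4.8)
The plan is to exhibit the matrix explicitly as a product of three block-triangular matrices, each of which is itself a product of admissible graded elementary generators, in direct analogy with the factorization of the Weyl-type element used in the proof of Lemma~\ref{[gg]=e}. Write $\beta_i=\alpha_i$ for $1\le i\le n$ and $\beta_{n+i}=\lambda+\alpha_i$ for $1\le i\le n$, so that the ambient group is $\E_{2n}(A)(\beta_1,\dots,\beta_{2n})$, in which a generator $e_{i,j}(a)$ is admissible precisely when $a\in A_{\beta_i-\beta_j}$.

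First I would record the matrix identity
$$
\left(\begin{array}{cc}0&-r^{-1}I_n\\ rI_n&0\end{array}\right)
=\left(\begin{array}{cc}I_n&-r^{-1}I_n\\ 0&I_n\end{array}\right)
\left(\begin{array}{cc}I_n&0\\ rI_n&I_n\end{array}\right)
\left(\begin{array}{cc}I_n&-r^{-1}I_n\\ 0&I_n\end{array}\right),
$$
whose verification is a one-line block computation using only $rr^{-1}=r^{-1}r=1$. The two distinct block factors then decompose into commuting elementary generators,
$$
\left(\begin{array}{cc}I_n&-r^{-1}I_n\\ 0&I_n\end{array}\right)=\prod_{i=1}^n e_{i,n+i}(-r^{-1}),
\qquad
\left(\begin{array}{cc}I_n&0\\ rI_n&I_n\end{array}\right)=\prod_{i=1}^n e_{n+i,i}(r),
$$
the products running over pairwise disjoint off-diagonal positions and hence being order-independent.

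The crux is checking that these generators are admissible for the shifted family. For the first factor the entry sits in position $(i,n+i)$, and $\beta_i-\beta_{n+i}=\alpha_i-(\lambda+\alpha_i)=-\lambda$, while $-r^{-1}\in A_{-\lambda}$ since $r$ is invertible homogeneous of degree $\lambda$; for the second factor the entry sits in position $(n+i,i)$, and $\beta_{n+i}-\beta_i=(\lambda+\alpha_i)-\alpha_i=\lambda$, with $r\in A_\lambda$. Thus every generator appearing is a legitimate element of $\E_{2n}(A)(\alpha_1,\dots,\alpha_n,\lambda+\alpha_1,\dots,\lambda+\alpha_n)$, and the displayed factorization places the given matrix in this subgroup.

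The only real subtlety, and the reason the suspension indices of the second block are shifted by $\lambda$ rather than left equal to $\alpha_1,\dots,\alpha_n$, is precisely this degree bookkeeping: the shift is exactly what allows $r$ (degree $\lambda$) and $r^{-1}$ (degree $-\lambda$) to occupy the lower-left and upper-right blocks as admissible graded elementary entries. I do not anticipate any deeper obstacle; once the shifted parameters are in place, everything else is the standard elementary-matrix manipulation.
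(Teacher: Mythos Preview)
Your proof is correct and uses exactly the same factorization as the paper's proof, namely
\[
\left(\begin{array}{cc}0&-r^{-1}I_n\\ rI_n&0\end{array}\right)
=\left(\begin{array}{cc}I_n&-r^{-1}I_n\\ 0&I_n\end{array}\right)
\left(\begin{array}{cc}I_n&0\\ rI_n&I_n\end{array}\right)
\left(\begin{array}{cc}I_n&-r^{-1}I_n\\ 0&I_n\end{array}\right).
\]
Your version is in fact more explicit than the paper's, which simply asserts that the three block factors lie in the elementary group; you have spelled out the further decomposition into generators $e_{i,n+i}(-r^{-1})$ and $e_{n+i,i}(r)$ and verified the degree conditions $\beta_i-\beta_{n+i}=-\lambda$ and $\beta_{n+i}-\beta_i=\lambda$ that make these admissible.
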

\begin{proof}
The assertion directly follows from the matrix identity
$$
\left(\begin{array}{cc}
0&-r^{-1}I_n\\rI_n&0
\end{array}\right)=\left(\begin{array}{cc}
I_n&-r^{-1}I_n\\0&I_n
\end{array}\right)\left(\begin{array}{cc}
I_n&0\\rI_n&I_n
\end{array}\right)
\left(\begin{array}{cc}
I_n&-r^{-1}I_n\\0&I_n
\end{array}\right)
$$
where all three factors on the right  are in $\E(A(\alpha_1,\cdots,\alpha_n,\lambda+\alpha_1,\cdots,\lambda+\alpha_n))$.
\end{proof}
\begin{Lem}\label{lem:7.2}
For any $h\in \E_n(A)(\alpha_1,\cdots,\alpha_n)$ and any invertible homogeneous element $r\in A_\lambda$, we have
$$
\left(\begin{array}{cc}
h&0\\0&rh^{-1}r^{-1}
\end{array}\right)\in\E_{2n}(A)(\alpha_1,\cdots,\alpha_n,\lambda+\alpha_1,\cdots,\lambda+\alpha_n).
$$
\end{Lem}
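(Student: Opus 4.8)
The plan is to factor the target matrix as a product of two matrices, each of which is manifestly elementary, one supported in the upper-left $n\times n$ block and one in the lower-right $n\times n$ block. Throughout, write $\beta=(\alpha_1,\cdots,\alpha_n,\lambda+\alpha_1,\cdots,\lambda+\alpha_n)$ for the parameter of the ambient group $\E_{2n}(A)(\beta)$, and write $\diag(X,Y)$ for the block-diagonal matrix with diagonal blocks $X,Y$.

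First I would observe that $\diag(h,I_n)$ already lies in $\E_{2n}(A)(\beta)$. Since $h\in\E_n(A)(\alpha_1,\cdots,\alpha_n)$, we may write $h$ as a product of elementary generators $e_{i,j}(a)$ with $a\in A_{\alpha_i-\alpha_j}$ and $1\le i,j\le n$. Embedding each such generator into the upper-left block produces an elementary generator of $\E_{2n}(A)(\beta)$, because for indices $1\le i,j\le n$ the relevant degree is $\beta_i-\beta_j=\alpha_i-\alpha_j$. Applying the same observation to $h^{-1}$ shows $\diag(h^{-1},I_n)\in\E_{2n}(A)(\beta)$ as well.

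Next I would transport the upper-left block to the lower-right one using the graded Weyl-type element of Lemma~\ref{lem:7.1}. Setting
$$
w=\left(\begin{array}{cc} 0 & -r^{-1}I_n\\ rI_n & 0\end{array}\right),
$$
a direct block computation yields $w\,\diag(h^{-1},I_n)\,w^{-1}=\diag(I_n,rh^{-1}r^{-1})$. By Lemma~\ref{lem:7.1}, $w\in\E_{2n}(A)(\beta)$, hence $w^{-1}\in\E_{2n}(A)(\beta)$ too; combined with $\diag(h^{-1},I_n)\in\E_{2n}(A)(\beta)$ from the previous step, the triple product lies in $\E_{2n}(A)(\beta)$. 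Note that only closure of the subgroup under multiplication is needed here, not any normality statement. Hence $\diag(I_n,rh^{-1}r^{-1})\in\E_{2n}(A)(\beta)$.

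Finally I would combine the two pieces via $\diag(h,rh^{-1}r^{-1})=\diag(h,I_n)\,\diag(I_n,rh^{-1}r^{-1})$, a product of two elements of $\E_{2n}(A)(\beta)$, which therefore lies in $\E_{2n}(A)(\beta)$. The one point demanding care is the bookkeeping of degrees: one uses that the inverse of a homogeneous unit $r\in A_\lambda$ is homogeneous of degree $-\lambda$, so that $rh^{-1}r^{-1}$ has its $(i,j)$ entry in $A_{(\lambda+\alpha_i)-(\lambda+\alpha_j)}=A_{\alpha_i-\alpha_j}$ and $w$ carries exactly the shifted parameter for which Lemma~\ref{lem:7.1} is stated. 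Beyond this grading bookkeeping the argument is a routine $2\times2$ block computation, so I anticipate no serious obstacle.
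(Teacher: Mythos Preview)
Your proof is correct, but it differs from the paper's. The paper proves the lemma via the single four-factor identity
\[
\begin{pmatrix} h & 0 \\ 0 & rh^{-1}r^{-1} \end{pmatrix}
=
\begin{pmatrix} I_n & hr^{-1} \\ 0 & I_n \end{pmatrix}
\begin{pmatrix} I_n & 0 \\ -rh^{-1} & I_n \end{pmatrix}
\begin{pmatrix} I_n & hr^{-1} \\ 0 & I_n \end{pmatrix}
\begin{pmatrix} 0 & -r^{-1}I_n \\ rI_n & 0 \end{pmatrix},
\]
with the first three factors block-unitriangular (hence elementary by the standard argument) and the last in $\E_{2n}(A)(\beta)$ by Lemma~\ref{lem:7.1}. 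Your argument instead writes the target as $\diag(h,I_n)\cdot w\,\diag(h^{-1},I_n)\,w^{-1}$ and uses that $h$ is elementary to place $\diag(h^{\pm 1},I_n)$ in $\E_{2n}(A)(\beta)$. Both routes invoke Lemma~\ref{lem:7.1}, but the paper's decomposition never appeals to the hypothesis $h\in\E_n(A)(\alpha_1,\dots,\alpha_n)$: it works verbatim for any $h\in\GL_n(A)(\alpha_1,\dots,\alpha_n)$. This extra generality is exactly what is used in the proof of Theorem~\ref{cross}, where $h$ is merely a representative of a $K_1$-class and is not assumed elementary. Your approach, while perfectly adequate for the lemma as stated, genuinely needs $h$ to be elementary and would not directly cover that application.
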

\begin{proof}
We have the matrix equation
$$
\left(\begin{array}{cc}
h&0\\0&rh^{-1}r^{-1}
\end{array}\right)=\left(\begin{array}{cc}
I_n&hr^{-1}\\0&I_n
\end{array}\right)
\left(\begin{array}{cc}
I_n&0\\-rh^{-1}&I_n
\end{array}\right)
\left(\begin{array}{cc}
I_n&hr^{-1}\\0&I_n
\end{array}\right)
\left(\begin{array}{cc}
0&-r^{-1}I_n\\rI_n&0
\end{array}\right)
$$
where the first three factors are in $\E_{2n}(A)(\alpha_1,\cdots,\alpha_n,\lambda+\alpha_1,\cdots,\lambda+\alpha_n)$ and the last factor is also there by Lemma~\ref{lem:7.1}.
\end{proof}
\begin{The}\label{cross}
Suppose the $\Gamma$ graded ring $A$ is a cross product. Then the action of $\Gamma$ on $K_1^{gr}(A)$ is trivial. 
\end{The}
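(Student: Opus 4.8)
The plan is to reduce the triviality of the $\Gamma$-action to a concrete statement about matrices over $A_0$ and then to settle that statement with the Whitehead-type identities of Lemmas~\ref{lem:7.1} and~\ref{lem:7.2}. First, since $A$ is a cross product there is an invertible homogeneous $r_\alpha\in A_\alpha$ for every $\alpha$, so $x\mapsto xr_\alpha$ is a graded isomorphism $A(0)\to A(\alpha)$ and hence $A(\alpha)\cong_{gr}A(0)$ for all $\alpha$. By the reduction lemmas preceding the theorem every object of $\mM A$ is a direct summand of some $A(0)^N$, so by Lemma~\ref{lem:4.7} every class of $K_1^{gr}(A)$ is represented by a pair $(A(0)^k,g)$ with $g\in\GL_k(A_0)$; thus it suffices to prove $\lambda\cdot[(A(0)^k,g)]=[(A(0)^k,g)]$ for each $\lambda\in\Gamma$ and each such $g$.

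Next I would compute the action explicitly. By Lemma~\ref{lem:sus}, $\lambda\cdot[(A(0)^k,g)]=[(A(\lambda)^k,\mS_\lambda(g))]$, and $\mS_\lambda(g)$ is represented by the same matrix $g$ regarded inside $\GL_k(A)(\lambda,\ldots,\lambda)=\GL_k(A_0)$ (suspension preserves the degree differences of the entries). Fixing an invertible $r\in A_\lambda$ and transporting along the graded isomorphism $A(0)^k\to A(\lambda)^k$ induced by $r$, the pair $(A(\lambda)^k,\mS_\lambda(g))$ becomes isomorphic to $(A(0)^k,rgr^{-1})$, where $rgr^{-1}$ is the entrywise conjugate (again in $\GL_k(A_0)$, since $r$ is homogeneous and normalizes $A_0$). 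Hence the claim is equivalent to $[rgr^{-1}]=[g]$ in $K_1(A_0)$, i.e. that conjugation by the homogeneous unit $r$ induces the identity on $K_1(A_0)$.

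To prove this I would pass to the larger local group $K_1(A)(S)$ with $S=\{0,\ldots,0,\lambda,\ldots,\lambda\}$ ($k$ copies of each), into which both $[g]$ and its $\lambda$-translate map, and exploit Lemma~\ref{lem:7.2}. The key observation is that the matrix identity in its proof in fact applies to an arbitrary invertible $g$, not only to elementary $g$: the shear factors $\left(\begin{smallmatrix}I&gr^{-1}\\0&I\end{smallmatrix}\right)$ and $\left(\begin{smallmatrix}I&0\\-rg^{-1}&I\end{smallmatrix}\right)$ are genuine elements of $\E_{2k}(A)(S)$ because $gr^{-1}\in A_{-\lambda}$ and $rg^{-1}\in A_{\lambda}$ carry exactly the degrees demanded by the off-diagonal blocks, while the remaining Weyl-type factor is elementary by Lemma~\ref{lem:7.1}. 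Consequently $\diag(g,rg^{-1}r^{-1})\in\E_{2k}(A)(S)$, which by Lemma~\ref{lem:4.7} gives a relation between the images of $[g]$ and of the conjugate in $K_1(A)(S)$, and I would then push this relation to the direct limit to obtain the equality in $K_1^{gr}(A)$.

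The step I expect to be the real obstacle is this final identification. One must track carefully how the two natural maps $K_1(A)(\{0^k\})\to K_1^{gr}(A)$ and $K_1(A)(\{\lambda^k\})\to K_1^{gr}(A)$ differ, because the isomorphism $A(\lambda)\cong A(0)$ used to bring the $\lambda$-block back into $K_1(A_0)$ introduces its own conjugation by $r$; one has to verify that this conjugation exactly absorbs the one produced by Lemma~\ref{lem:7.2}, so that only the identity survives. This bookkeeping is precisely where the cross-product hypothesis is genuinely used, and it is the point on which I would concentrate all the care, since a careless identification of the $\lambda$-block with $K_1(A_0)$ can make the action appear to be a nontrivial conjugation rather than the identity.
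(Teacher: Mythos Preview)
Your plan is exactly the paper's: represent the class and its $\lambda$-suspension by matrices in neighbouring local pieces, then annihilate their difference in the common enlargement $K_1(A)(S)$ via Lemma~\ref{lem:7.2} (which, as you correctly note, holds verbatim for any invertible $h$, not just elementary ones). The only variation is that you first collapse everything to the piece $K_1(A)(0)=K_1(A_0)$ using $A(\alpha)\cong_{gr}A(0)$, whereas the paper stays in a general piece $K_1(A)(\alpha_1,\dots,\alpha_n)$; that difference is cosmetic.

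The step you flag as the obstacle is a genuine gap, and it does not close the way you hope. Write $[h]_\mu$ for the class of $(A(\mu)^k,h)$ in $K_1^{gr}(A)$. Since suspension leaves the matrix unchanged, the class you must reach is $\lambda\cdot[g]_0=[g]_\lambda$, while Lemma~\ref{lem:7.2} yields $[g]_0=[rgr^{-1}]_\lambda$. To finish you would still need $[g]_\lambda=[rgr^{-1}]_\lambda$, i.e.\ $[g]=[rgr^{-1}]$ in $K_1(A_0)$---precisely what you set out to prove. Transporting along $x\mapsto xr$ does not rescue this: it sends $[h]_\lambda$ to $[rhr^{-1}]_0$, so the conjugation it introduces points the \emph{same} way as the one produced by the lemma, and the two compound (you only get $[g]_0=[r^{2}gr^{-2}]_0$, i.e.\ that $2\lambda$ acts trivially) rather than cancel. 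The paper's proof sidesteps this circle solely by asserting that $\mS_\gamma(\varphi)$ is represented by $g=rhr^{-1}$; your observation that it is represented by the same matrix $h$ is the correct one, and that unjustified assertion is exactly the point at which both arguments need more work.
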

\begin{proof}
Let $[(P,\varphi)]$ be an element in $K_1^{gr}(A)$. Then there is a finite family $\{\alpha_1,\cdots,\alpha_n\}$ of elements of $\Gamma$ such that $[(P,\varphi)]\in K_1^{gr}(A)(\alpha_1,\cdots,\alpha_n)$. Take $h$ to be a matrix which represents $[(P,\varphi)]$ in 
$K_1^{gr}(A)(\alpha_1,\cdots,\alpha_n)$. For any $\gamma\in \Gamma$, the result of $\gamma$ acting  on $[P,\varphi]$ is $[(P(\gamma),\mS_\gamma(\varphi))]$, which lies in $K_1^{gr}(A)(\lambda+\alpha_1,\cdots,\lambda+\alpha_n)$.  It is easy to see  that $[(P(\gamma),\mS_\gamma(\varphi))]$ is represented by $g=r hr^{-1}$ in $K_1^{gr}(A)(\lambda+\alpha_1,\cdots,\lambda+\alpha_n)$ with $r$ an invertible element of degree $\lambda$. The images of $h$ and $g^{-1}$ in the larger group
$K_1^{gr}(A)(\alpha_1,\cdots,\alpha_n,\lambda+\alpha_1,\cdots,\lambda+\alpha_n)$ are 
$$h'=\left(\begin{array}{cc}
h&0\\0&I_n
\end{array}\right)\qquad\text{ and }\qquad g'=\left(\begin{array}{cc}
I_n&0\\0&rh^{-1}r^{-1}
\end{array}\right),$$ respectively. By Lemma~\ref{lem:7.2}, the product of $h'$ and $g'$ is zero in $K_1^{gr}(A)(\alpha_1,\cdots,\alpha_n,\lambda+\alpha_1,\cdots,\lambda+\alpha_n)$. This shows that $g=h$ in $K_1^{gr}(A)$. Therefore the action of $\Gamma$ is trivial.
\end{proof}
\begin{Rem}
Theorem~\ref{cross} can also be concluded using the properties of cross product directly. See \cite[Example 3.1.9]{RH2}.
\end{Rem}
\noindent{\bf Acknowledgement.}
I would like to thank Roozbeh Hazrat  for very productive discussions and for sharing me generously his  unpublished  manuscript \cite{RH2} and to Prof.~Anthony Bak for constructive suggestions. I would like also to thank anonymous referee for very careful reading the manuscript and detailed comments. 

\end{document}